\newtheorem{theorem}{Theorem}
\newtheorem{corollary}[theorem]{Corollary}
\newtheorem{definition}[theorem]{Definition}
\newtheorem{lemma}[theorem]{Lemma}
\newtheorem{proposition}[theorem]{Proposition}
\newtheorem{remark}[theorem]{Remark}
\newenvironment{proof}[1][Proof]{\textbf{#1.} }{\ \rule{0.5em}{0.5em}}
\begin{document}

\title{General upper and lower tail estimates using Malliavin calculus and Stein's equations}
\author{Richard Eden$^{\dagger}$\\Department of Mathematics,
\and Frederi Viens\thanks{\texttt{viens@purdue.edu}\ \ \emph{corresponding author}%
}~$^{,}$\thanks{Both authors' research partially supported by NSF grant
DMS-0907321.}\\Department of Statistics
\and \vspace*{-0.2in}\\Purdue University\\150 N. University St.\\West Lafayette, IN 47907-2067}
\date{\ }
\maketitle

\begin{abstract}
Following a strategy recently developed by Ivan Nourdin and Giovanni Peccati,
we provide a general technique to compare the tail of a given random variable
to that of a reference distribution. This enables us to give concrete
conditions to ensure upper and/or lower bounds on the random variable's tail
of various power or exponential types. The Nourdin-Peccati strategy analyzes
the relation between Stein's method and the Malliavin calculus, and is adapted
to dealing with comparisons to the Gaussian law. By studying the behavior of
the solution to general Stein equations in detail, we show that the strategy
can be extended to comparisons to a wide class of laws, including many Pearson distributions.

\end{abstract}

\noindent\textbf{Keywords:} Malliavin calculus; tail estimate; Stein's
equation; Pearson distribution.

\noindent\textbf{2010 Mathematics Subject Classification:} primary 60H07;
secondary 60G15, 60E15.

\section{Introduction}

In this article, following a strategy recently developed by Ivan Nourdin and
Giovanni Peccati, we provide a general technique to compare the tail of a
given random variable to that of a reference distribution, and apply it for
all reference distributions in the so-called Pearson class, which enables us
to give concrete conditions to ensure upper and/or lower bounds on the random
variable's tail of power or exponential type. The strategy uses the relation
between Stein's method and the Malliavin calculus. In this introduction, we
detail the main ideas of this strategy, including references to related works;
we also summarize the results proved in this article, and the methods used to
prove them.

\subsection{Stein's method and the analysis of Nourdin and Peccati}

Stein's method is a set of procedures that is often used to measure distances
between distributions of random variables. The starting point is the so-called
Stein equation. To motivate it, recall the following result which is sometimes
referred to as Stein's lemma. Suppose $X$ is a centered random variable. Then
$X\overset{\text{Law}}{=}Z\thicksim\mathcal{N}(0,1)$ if and only if
\begin{equation}
\mathbf{E}[f^{\prime}(X)-Xf(X)]=0\label{stein_normal}%
\end{equation}
for all continuous and piecewise differentiable functions $f$ such that
$\mathbf{E}[|f^{\prime}(X)|]<\infty$ (see e.g. \cite{CS}, \cite{DZ},
\cite{Stein}). If the above expectation is non-zero but close to zero, Stein's
method can give us a way to express how close the law of $X$ might be to the
standard normal law, in particular by using the concept of Stein equation. For
a given test function $h$, this is the ordinary differential equation
$f^{\prime}\left(  x\right)  -xf\left(  x\right)  =h\left(  x\right)
-\mathbf{E}\left[  h\left(  Z\right)  \right]  $ with continuous and piecewise
differentiable solution $f$. As we will see in more detail and greater
generality further below, if one is able to prove boundedness properties of
$f$ and $f^{\prime}$ for a wide class of test functions $f$, this can help
evaluate the distance between the law of $Z$ and laws of random variables that
might be close to $Z$, including methods for proving convergence in
distribution. This fundamental feature of Stein's method is described in many
works; see \cite{CS} for a general introduction and review.

As a testament to the extraordinary versatility of Stein's method, recently
Ivan Nourdin and Giovanni Peccati discovered a connection between Stein's
method and the Malliavin calculus, with striking applications in a number of
problems in stochastic analysis. Motivated by Berry-Ess\'{e}en-type theorems
for convergence of sequences of random variables in Wiener chaos, Nourdin and
Peccati's first paper \cite{NP} on this connection considers an arbitrary
square-integrable Malliavin-differentiable random variable $X$ on a Wiener
space, and associates the random variable%
\begin{equation}
G:=\langle DX;-DL^{-1}X\rangle\label{Gee}%
\end{equation}
where $D$ is the Malliavin derivative operator on the Wiener space, and
$L^{-1}$ is the pseudo-inverse of the generator of the Ornstein-Uhlenbeck
semigroup (see Section \ref{MALL} for precise definitions of these operators).
One easily notes that if $X$ is standard normal, then $G\equiv1$. Then by
measuring the distance between $G$ and $1$ for an arbitrary $X$, one can
measure how close the law of $X$ is to the normal law. The connection to
Stein's method comes from their systematic use of the basic observation that
$\mathbf{E}\left[  Gf\left(  X\right)  \right]  =\mathbf{E}\left[  Xf^{\prime
}\left(  X\right)  \right]  $. It leads to the following simple and efficient
strategy for measuring distances between the laws of $X$ and $Z$. To evaluate,
e.g., $\mathbf{E}\left[  h(X)\right]  -\mathbf{E}\left[  h(Z)\right]  $ for
test functions $h$, one can:

\begin{enumerate}
\item write $\mathbf{E}\left[  h(X)\right]  -\mathbf{E}\left[  h(Z)\right]  $
using the solution of Stein's equation, as $\mathbf{E}\left[  f^{\prime
}\left(  X\right)  \right]  -\mathbf{E}\left[  Xf\left(  X\right)  \right]  $;

\item use their observation to transform this expression into $\mathbf{E}%
\left[  f^{\prime}\left(  X\right)  \left(  1-G\right)  \right]  $;

\item use the boundedness and decay properties of $f^{\prime}$ (these are
classically known from Stein's equation) to exploit the proximity of $G$ to
$1$.
\end{enumerate}

As we said, this strategy of relating Stein's method and the Malliavin
calculus is particularly useful for analyzing problems in stochastic analysis.
In addition to their study of convergence in Wiener chaos in \cite{NP}, which
they followed up with sharper results in \cite{NP2}, Nourdin and Peccati have
implemented several other applications including: the study of cummulants on
Wiener chaos \cite{NPcum}, of fluctuations of Hermitian random matrices
\cite{NPmtx}, and, with other authors, other results about the structure of
inequalities and convergences on Wiener space, such as \cite{BNP},
\cite{NPRe}, \cite{NPRe2}, \cite{NPRev}. In \cite{NV}, it was pointed out that
if $\rho$ denotes the density of $X$, then the function
\begin{equation}
g(z):=\rho^{-1}(z)\int_{z}^{\infty}y\rho(y)\,dy, \label{geeintro}%
\end{equation}
which was originally defined by Stein in \cite{Stein}, can be represented as%
\[
g\left(  z\right)  =\mathbf{E}[G|X=z],
\]
resulting in a convenient formula for the density $\rho$, which was then
exploited to provide new Gaussian lower bound results for certain stochastic
models, in \cite{NV} for Gaussian fields, and subsequently in \cite{Viens} for
polymer models in Gaussian and non-Gaussian environments, in \cite{nq} for
stochastic heat equations, in \cite{BNP} for statistical inference for
long-memory stochastic processes, and multivariate extensions of density
formulas in \cite{AMV}.

\subsection{Summary of our results}

Our specific motivation is drawn from the results in \cite{Viens} which make
assumptions on how $G$ compares to $1$ almost surely, and draw conclusions on
how the tail of $X$, i.e. $\mathbf{P}\left[  X>z\right]  $, compares to the
normal tail $\mathbf{P}\left[  Z>z\right]  $. By the above observations, these
types of almost-sure assumptions are equivalent to comparing the deterministic
function $g$ to the value $1$. For instance, one result in \cite{Viens} can be
summarized by saying that (under some additional regularity conditions) if
$G\geq1$ almost surely, i.e. if $g\left(  z\right)  \geq1$ everywhere, then
for some constant $c$ and large enough $z$, $\mathbf{P}\left[  X>z\right]
>c\mathbf{P}\left[  Z>z\right]  $. This result, and all the ones mentioned
above, concentrate on comparing laws to the standard normal law, which is done
by comparing $G$ to the constant $1$, as this constant is the
\textquotedblleft$G$\textquotedblright\ for the standard normal $Z$.\bigskip

In this paper, we find a framework which enables us to compare the law of $X$
to a wide range of laws. Instead of assuming that $g$ is comparable to $1$, we
only assume that it is comparable to a polynomial of degree less than or equal
to $2$. In \cite{Stein}, Stein had originally noticed that the set of all
distributions such that their $g$ is such a polynomial, is precisely the
so-called Pearson class of distributions. They encompass Gaussian, Gamma, and
Beta distributions, as well as the inverse-Gamma, and a number of continuous
distributions with only finitely many moments, with prescribed power tail
behavior. This means that one can hope to give precise criteria based on $g$,
or via Malliavin calculus based on $G$, to guarantee upper and/or lower bounds
on the tail $\mathbf{P}\left[  X>z\right]  $, with various Gaussian,
exponential, or power-type behaviors. We achieve such results in this paper.

Specifically, our first set of results is in the following general framework.
Let $Z$ be a reference random variable supported on $\left(  a,b\right)  $
where $-\infty\leq a<b\leq+\infty$, with a density $\rho_{\ast}$ which is
continuous on $\mathbf{R}$ and differentiable on $(a,b)$. The function $g$
corresponding to $\rho_{\ast}$ is given as in (\ref{geeintro}), and we denote
it by $g_{\ast}$ (the subscripts $_{\ast}$ indicate that these are relative to
our reference r.v.):%
\begin{equation}
g_{\ast}\left(  z\right)  =\frac{\int_{z}^{\infty}y\rho_{\ast}(y)\,dy}%
{\rho_{\ast}(z)}\mathbf{1}_{(a,b)}(z). \label{geestar}%
\end{equation}
We also use the notation%
\[
\Phi_{\ast}\left(  z\right)  =\mathbf{P}\left[  Z>z\right]
\]
for our reference tail. Throughout this article, for notational convenience,
we assume that $Z$ is centered (except when specifically stated otherwise in
Section \ref{PearsonList} in the Appendix). Let $X$ be
Malliavin-differentiable, supported on $(a,b)$, with its $G:=\langle
DX;-DL^{-1}X\rangle$ as in (\ref{Gee}).

\begin{itemize}
\item (Theorem \ref{thm:TailLBound}) Under mild regularity and integrability
conditions on $Z$ and $X$, if $G\geq g_{\ast}\left(  X\right)  $ almost
surely, then for all $z<b$,%
\[
\mathbf{P}[X>z]\geq\Phi_{\ast}(z)-\frac{1}{Q(z)}\int_{z}^{b}(2y-z)\mathbf{P}%
[X>y]\,dy,
\]
where%
\begin{equation}
Q\left(  z\right)  :=z^{2}-zg_{\ast}\left(  z\right)  +g_{\ast}\left(
z\right)  ; \label{Q}%
\end{equation}
typically $Q$ is of order $z^{2}$ for large $z$.

\item (Theorem \ref{thm:TailUBound}) Under mild regularity and integrability
conditions on $Z$ and $X$, if $G\leq g_{\ast}\left(  X\right)  $ almost
surely, then for some constant $c$ and all large enough $z<b$,%
\[
\mathbf{P}[X>z]\leq c\Phi_{\ast}(z).
\]

\end{itemize}

These results are generalizations of the work in \cite{Viens}, where only the
standard normal $Z$ was considered. They can be rephrased by referring to $g$
as in (\ref{geeintro}), which coincides with $g\left(  z\right)
=\mathbf{E}\left[  G|X=z\right]  $, rather than $G$; this can be useful to
apply the theorems in contexts where the definition of $X$ as a member of a
Wiener space is less explicit than the information one might have directly
about $g$. We have found, however, that the Malliavin-calculus interpretation
makes for efficient proofs of the above theorems.

The main application of these general theorems are to the Pearson class: $Z$
such that its $g_{\ast}$ is of the form $g_{\ast}\left(  z\right)  =\alpha
z^{2}+\beta z+\gamma$ in the support of $Z$. Assume $b=+\infty$, i.e. the
support of $Z$ is $(a,+\infty)$. Assume $\mathbf{E}\left[  |Z|^{3}\right]
<\infty$ (which is equivalent to $\alpha<1/2$). Then the lower bound above can
be made completely explicit, as can the constant $c$ in the upper bound.

\begin{itemize}
\item (Corollary \ref{cor:TailLBoundP}) Under mild regularity and
integrability conditions on $X$ [including assuming that there exists $c>2$
such that $g\left(  z\right)  \leq z^{2}/c$ for large $z$], if $G\geq g_{\ast
}\left(  X\right)  $ almost surely, then for any $c^{\prime}<1/\left(
1+2(1-\alpha)(c-2)\right)  $ and all $z$ large enough,%
\[
\mathbf{P}[X>z]\geq c^{\prime}\Phi_{\ast}(z).
\]

\item (Corollary \ref{thm:TailUBoundP2}) Under mild regularity and
integrability conditions on $X$, if $G\leq g_{\ast}\left(  X\right)  $ almost
surely, then for any $c>(1-\alpha)/(1-2\alpha)$, and all $z$ large enough,%
\[
\mathbf{P}[X>z]\leq c\Phi_{\ast}(z).
\]

\end{itemize}

The results above can be used conjointly with asymptotically sharp conclusions
when upper and lower bound assumptions on $G$ are true simultaneously. For
instance, we have the following, phrased using $g$'s instead of $G$'s.

\begin{itemize}
\item (Corollary \ref{cor:TailX2}, point 2) On the support $(a,+\infty)$, let
$g_{\ast}\left(  z\right)  =\alpha z^{2}+\beta z+\gamma$ and let $\bar
{g}_{\ast}\left(  z\right)  =\bar{\alpha}z^{2}+\bar{\beta}z+\bar{\gamma}$ with
non-zero $\alpha$ and $\bar{\alpha}$. If for the Malliavin-differentiable $X$
and its corresponding $g$, we have for all $z>a$, $g_{\ast}\left(  z\right)
\leq g\left(  z\right)  \leq\bar{g}_{\ast}\left(  z\right)  $, then there are
constants $c$ and $\bar{c}$ such that for large $z$,%
\[
cz^{-1-1/\alpha}\leq\mathbf{P}[X>z]\leq\bar{c}z^{-1-1/\bar{\alpha}}.
\]

\item (see Corollary \ref{cor:TailX1}) A similar result holds when
$\alpha=\bar{\alpha}=0$, in which $\mathbf{P}[X>z]$ compares to the Gamma-type
tail $z^{-1-\gamma/\beta^{2}}\exp\left(  -z/\beta\right)  $.\bigskip
\end{itemize}

The strategy used to prove these results is an analytic one, following the
initial method of Nourdin and Peccati, this time using the Stein equation
relative to the function $g_{\ast}$ defined in (\ref{geestar}) for a general
reference r.v. $Z$:%
\[
g_{\ast}\left(  x\right)  f^{\prime}\left(  x\right)  -xf\left(  x\right)
=h\left(  x\right)  -\mathbf{E}\left[  h\left(  Z\right)  \right]  .
\]
Our mathematical techniques are based on a careful analysis of the properties
of $g_{\ast}$, its relation to the function $Q$ defined in (\ref{Q}), and what
consequences can be derived for the solutions of Stein's equation. The basic
general theorems' proofs use a structure similar to that employed in
\cite{Viens}. The applications to the Pearson class rely heavily on explicit
computations tailored to this case, which are facilitated via the
identification of $Q$ as a useful way to express these computations.

This article is structured as follows. Section 2 gives an overview of Stein's
equations, and derives some fine properties of their solutions by referring to
the function $Q$. These will be crucial in the proofs of our general upper and
lower bound results, which are presented in Section 3 after an overview of the
tools of Malliavin calculus which are needed in this article. Applications to
comparisons with Pearson distributions, with a particular emphasis on tail
behavior, including asymptotic results, are in Section 4. Section 5 is an
Appendix containing the proofs of some technical lemmas and some details on
Pearson distributions.

This article is dedicated to the memory of Professor Paul Malliavin.

\section{The Stein equation}

\subsection{Background and classical results}

\noindent\textbf{Characterization of the law of }$Z$. As before, let $Z$ be
centered with a differentiable density on its support $(a,b)$, and let
$g_{\ast}$ be defined as in (\ref{geestar}). Nourdin and Peccati (Proposition
6.4 in \cite{NP}) collected the following results concerning this equation. If
$f$ is a function that is continuous and piecewise continuously
differentiable, and if $\mathbf{E}[|f^{\prime}(Z)|g_{\ast}(Z)]<\infty$, Stein
(Lemma 1, p. 59 in \cite{Stein}) proved that
\[
\mathbf{E}[g_{\ast}(Z)f^{\prime}(Z)-Zf(Z)]=0
\]
(compare this with (\ref{stein_normal}) for the special case $Z\thicksim
\mathcal{N}(0,1)$). Conversely, assume that%
\begin{equation}
\int_{0}^{b}\frac{z}{g_{\ast}(z)}\,dz=\infty\qquad\mathrm{and}\qquad\int
_{a}^{0}\frac{z}{g_{\ast}(z)}\,dz=-\infty. \label{tau_cond}%
\end{equation}
If a random variable $X$ has a density, and for any differentiable function
$f$ such that $x\mapsto|g_{\ast}(x)f^{\prime}(x)|+|xf(x)|$ is bounded,%
\begin{equation}
\mathbf{E}[g_{\ast}(X)f^{\prime}(X)-Xf(X)]=0 \label{tau_characterize}%
\end{equation}
then $X$ and $Z$ have the same law. In other words, under certain conditions,
(\ref{tau_characterize}) can be used to characterize the law of a centered
random variable $X$ as being equal to that of $Z$.\bigskip

\noindent\textbf{Stein's equation, general case; distances between
distributions. }If $h$ is a fixed bounded piecewise continuous function such
that $\mathbf{E}[|h(Z)|]<\infty$, the corresponding \textit{Stein equation}
for $Z$ is the ordinary differential equation in $f$ defined by%
\begin{equation}
h(x)-\mathbf{E}[h(Z)]=g_{\ast}(x)f^{\prime}(x)-xf(x)\text{.}\label{stein}%
\end{equation}
The utility of such an equation is apparent when we evaluate the functions at
$X$ and take expectations:%
\begin{equation}
\mathbf{E}[h(X)]-\mathbf{E}[h(Z)]=\mathbf{E}[g_{\ast}(X)f^{\prime
}(X)-Xf(X)]\text{.}\label{stein_exp}%
\end{equation}
The idea is that if the law of $X$ is \textquotedblleft
close\textquotedblright\ to the law of $Z$, then the right side of
(\ref{stein_exp}) would be close to $0$. Conversely, if the test function $h$
can be chosen from specific classes of functions so that the left side of
(\ref{stein_exp}) denotes a particular notion of distance between $X$ and $Z$,
the closeness of the right-hand side of (\ref{stein_exp}) to zero, in some
uniform sense in the $f$'s satisfying Stein's equation (\ref{stein}) for all
the $h$'s in that specific class of test functions, will imply that the laws
of $X$ and $Z$ are close in the corresponding distance. For this purpose, it
is typically crucial to establish boundedness properties of $f$ and
$f^{\prime}$ which are uniform over the class of test functions being considered.

For example, if $\mathcal{H=}\{h:||h||_{L}+||h||_{\infty}\leq1\}$ where
$||\cdot||_{L}$ is the Lipschitz seminorm, then the Fortet-Mourier distance
$d_{FM}(X,Z)$ between $X$ and $Z$ is defined as
\[
d_{FM}(X,Z)=\sup_{h\in\mathcal{H}}|\mathbf{E}[h(X)]-\mathbf{E}[h(Z)]|.
\]
This distance metrizes convergence in distribution, so by using properties of
the solution $f$ of the Stein equation (\ref{stein_exp}) for $h\in\mathcal{H}%
$, we can draw conclusions on the convergence in distribution of a sequence
$\{X_{n}\}$ to $Z$. See \cite{NP} and \cite{Dudley}\ for details and other
notions of distance between random variables.\bigskip

\noindent\textbf{Solution of Stein's equation. }Stein (Lemma 4, p. 62 in
\cite{Stein}) proved that if (\ref{tau_cond})\ is satisfied, then his equation
(\ref{stein}) has a unique solution $f$ which is bounded and continuous on
$(a,b)$. If $x\notin(a,b)$, then%
\begin{equation}
f(x)=-\frac{h(x)-\mathbf{E}\left[  h(Z)\right]  }{x} \label{stein_sol0}%
\end{equation}
while if $x\in(a,b)$,%
\begin{equation}
f(x)=\int_{a}^{x}\left(  h(y)-\mathbf{E}\left[  h(Z)\right]  \right)
\frac{e^{\int_{y}^{x}\frac{z\,dz}{g_{\ast}(z)}}}{g_{\ast}(y)}\,dy.
\label{stein_sol1}%
\end{equation}

\subsection{Properties of solutions of Stein's equations}

We assume throughout that $\rho_{\ast}$ is differentiable on $(a,b)$ and
continuous on $\mathbf{R}$ (for which it is necessary that $\rho_{\ast}$ be
null on $\mathbf{R}-(a,b)\ $). Consequently, $g_{\ast}$ is differentiable and
continuous on $(a,b)$. The next lemma records some elementary properties of
$g_{\ast}$, such as its positivity and its behavior near $a$ and $b$. Those
facts which are not evident are established in the Appendix. All are useful in
facilitating the proofs of other lemmas presented in this section, which are
key to our article.

\begin{lemma}
\label{lemgstar}Let $Z$ be centered and continuous, with a density $\rho
_{\ast}$ that is continuous on $\mathbf{R}$ and differentiable on its support
$(a,b)$, with $a$ and $b$ possibly infinite.

\begin{enumerate}
\item $g_{\ast}\left(  x\right)  >0$ if and only if $x\in(a,b);$

\item $g_{\ast}$ is differentiable on $(a,b)$ and we have $[g_{\ast}%
(x)\rho_{\ast}(x)]^{\prime}=-x\rho_{\ast}(x)$ therein;

\item $\lim\limits_{x\rightarrow a}g_{\ast}(x)\rho_{\ast}(x)=\lim
\limits_{x\rightarrow b}g_{\ast}(x)\rho_{\ast}(x)=0.$
\end{enumerate}
\end{lemma}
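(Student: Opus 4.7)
The plan is to reduce all three assertions to the single identity
\begin{equation*}
g_{\ast}(x)\rho_{\ast}(x)=\int_{x}^{b}y\rho_{\ast}(y)\,dy\qquad\text{for all }x\in\mathbf{R}.
\end{equation*}
On $(a,b)$ this is just (\ref{geestar}) rewritten with the fact that $\rho_{\ast}$ vanishes off $(a,b)$, so the upper limit $\infty$ may be replaced by $b$. For $x\geq b$ both sides are zero (empty domain of integration vs.\ the indicator in (\ref{geestar})). For $x\leq a$, the right side equals $\int_{a}^{b}y\rho_{\ast}(y)\,dy=\mathbf{E}[Z]=0$ by centeredness, while the left side is zero by the indicator. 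Once this identity is in hand, parts 2 and 3 fall out almost immediately.

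For part 2, the right-hand integral is differentiable in $x$ with derivative $-x\rho_{\ast}(x)$ by the fundamental theorem of calculus, using continuity of $\rho_{\ast}$; this is exactly $[g_{\ast}\rho_{\ast}]^{\prime}=-x\rho_{\ast}$. The differentiability of $g_{\ast}$ on $(a,b)$ then follows because $\rho_{\ast}>0$ there (it is a continuous density with support $(a,b)$), so $g_{\ast}=(g_{\ast}\rho_{\ast})/\rho_{\ast}$ is a quotient of differentiable functions with nonvanishing denominator. Part 3 is the computation of boundary values of the same integral: as $x\to b$ the domain shrinks, so $\int_{x}^{b}y\rho_{\ast}(y)\,dy\to 0$ (with dominated convergence handling the case $b=+\infty$, since $y\rho_{\ast}(y)$ is integrable by $\mathbf{E}|Z|<\infty$); as $x\to a$ the integral tends to $\mathbf{E}[Z]=0$ by centeredness.

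Part 1 is the only place where some care is needed. Outside $(a,b)$, $g_{\ast}\equiv 0$ by the indicator in (\ref{geestar}), so I must show strict positivity on $(a,b)$, i.e.\ $\int_{x}^{b}y\rho_{\ast}(y)\,dy>0$ for $x\in(a,b)$. Centeredness gives the alternative expression
\begin{equation*}
\int_{x}^{b}y\rho_{\ast}(y)\,dy=-\int_{a}^{x}y\rho_{\ast}(y)\,dy,
\end{equation*}
and I would use whichever form has the obviously correct sign: for $x\geq 0$, the direct form works since $y\geq x\geq 0$ on $(x,b)$ and $\rho_{\ast}$ is strictly positive on a subset of $(x,b)$ of positive Lebesgue measure (as $x<b$ and $(a,b)$ is the support of the continuous density $\rho_{\ast}$); for $x\leq 0$, the alternate form works symmetrically on $(a,x)$. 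The main obstacle, such as it is, is this strict-positivity step, and it is really only a matter of separating cases by the sign of $x$ and invoking the support/centering properties of $\rho_{\ast}$; the integral representation carries the rest of the lemma essentially for free.
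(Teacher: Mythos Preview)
Your proof is correct and follows essentially the same approach as the paper: both derive all three points from the integral representation $g_{\ast}(x)\rho_{\ast}(x)=\int_{x}^{b}y\rho_{\ast}(y)\,dy$, obtaining parts 2 and 3 by differentiating and evaluating boundary limits, and obtaining part 1 by splitting on the sign of $x$ and invoking $\mathbf{E}[Z]=0$ for the negative case. The only cosmetic difference is that the paper phrases the $x<0$ case of part 1 as a contradiction, whereas you argue it directly via $\int_{x}^{b}y\rho_{\ast}=-\int_{a}^{x}y\rho_{\ast}$; these are the same idea.
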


A different expression for the solution $f$ of Stein's equation (\ref{stein})
than the one given in (\ref{stein_sol0}, \ref{stein_sol1}), which will be more
convenient for our purposes, such as computing $f^{\prime}$ in the support of
$Z$, was given by Schoutens \cite{Schoutens} as stated in the next lemma.

\begin{lemma}
\label{lemschou}For all $x\in(a,b)$,%
\begin{equation}
f(x)=\frac{1}{g_{\ast}(x)\rho_{\ast}(x)}\int_{a}^{x}\left(  h(y)-\mathbf{E}%
\left[  h(Z)\right]  \right)  \rho_{\ast}(y)\,dy. \label{stein_sol2}%
\end{equation}
If $x\notin\left[  a,b\right]  $, differentiating (\ref{stein_sol0}) gives
\begin{equation}
f^{\prime}(x)=\frac{-xh^{\prime}(x)+h(x)-\mathbf{E}\left[  h(Z)\right]
}{x^{2}} \label{deriv2}%
\end{equation}
while if $x\in(a,b)$, differentiating (\ref{stein_sol2}) gives%
\begin{equation}
f^{\prime}(x)=\frac{x}{[g_{\ast}(x)]^{2}\rho_{\ast}(x)}\int_{a}^{x}\left(
h(y)-\mathbf{E}[h(Z)]\right)  \rho_{\ast}(y)\,dy+\frac{h(x)-\mathbf{E}%
[h(Z)]}{g_{\ast}(x)}. \label{deriv1}%
\end{equation}

\end{lemma}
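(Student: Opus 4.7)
My approach is to derive (\ref{stein_sol2}) directly from the already-known representation (\ref{stein_sol1}) by a purely algebraic manipulation, and then obtain the two derivative formulas by straightforward differentiation of (\ref{stein_sol0}) and (\ref{stein_sol2}) respectively. The single analytic input needed is Lemma \ref{lemgstar}(2), which identifies $g_{\ast}\rho_{\ast}$ as a function whose logarithmic derivative is exactly $-x/g_{\ast}(x)$ on $(a,b)$; this is the one fact that lets the exponential in (\ref{stein_sol1}) collapse to an elementary ratio.

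\textbf{Step 1 (equivalence of (\ref{stein_sol1}) and (\ref{stein_sol2})).} From Lemma \ref{lemgstar}(2), for $z\in(a,b)$ one has
$$\frac{d}{dz}\log\bigl[g_{\ast}(z)\rho_{\ast}(z)\bigr]=\frac{[g_{\ast}(z)\rho_{\ast}(z)]'}{g_{\ast}(z)\rho_{\ast}(z)}=-\frac{z}{g_{\ast}(z)},$$
so integrating between $y$ and $x$ gives
$$\exp\!\left(\int_{y}^{x}\frac{z\,dz}{g_{\ast}(z)}\right)=\frac{g_{\ast}(y)\rho_{\ast}(y)}{g_{\ast}(x)\rho_{\ast}(x)}.$$
Plugging this into (\ref{stein_sol1}) cancels the factor $g_{\ast}(y)$ in the denominator and lets the $x$-dependent factor $[g_{\ast}(x)\rho_{\ast}(x)]^{-1}$ be pulled outside the integral, yielding (\ref{stein_sol2}).

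\textbf{Step 2 (differentiation).} For $x\notin[a,b]$, $f$ is the elementary expression (\ref{stein_sol0}); the quotient rule applied to $-(h(x)-\mathbf{E}[h(Z)])/x$ gives (\ref{deriv2}) at once. For $x\in(a,b)$, write $F(x):=\int_{a}^{x}(h(y)-\mathbf{E}[h(Z)])\rho_{\ast}(y)\,dy$, so that $f=F/(g_{\ast}\rho_{\ast})$. The quotient rule then gives
$$f'(x)=\frac{F'(x)}{g_{\ast}(x)\rho_{\ast}(x)}-\frac{F(x)\,[g_{\ast}(x)\rho_{\ast}(x)]'}{[g_{\ast}(x)\rho_{\ast}(x)]^{2}}.$$
The fundamental theorem of calculus supplies $F'(x)=(h(x)-\mathbf{E}[h(Z)])\rho_{\ast}(x)$, and Lemma \ref{lemgstar}(2) substitutes $[g_{\ast}(x)\rho_{\ast}(x)]'=-x\rho_{\ast}(x)$; after cancelling a common factor of $\rho_{\ast}(x)$ in each term the formula (\ref{deriv1}) emerges.

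\textbf{Main obstacle.} There is essentially none: once Lemma \ref{lemgstar}(2) is available, the whole proof is bookkeeping with the quotient rule and the fundamental theorem of calculus. The only technicality worth flagging is that (\ref{deriv2}) involves $h'(x)$ and so implicitly restricts to points of differentiability of $h$, which is consistent with the standing piecewise-differentiable hypothesis on test functions in Stein's equation.
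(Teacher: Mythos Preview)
Your proof is correct and follows essentially the same route as the paper: you use the identity $[g_{\ast}\rho_{\ast}]'=-x\rho_{\ast}$ from Lemma~\ref{lemgstar}(2) to rewrite the exponential factor in (\ref{stein_sol1}) as the ratio $g_{\ast}(y)\rho_{\ast}(y)/[g_{\ast}(x)\rho_{\ast}(x)]$, and then differentiate using the quotient rule together with that same identity. The paper's argument is identical in substance and order.
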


The proof of this lemma (provided in the Appendix for completeness) also gives
us the next one.

\begin{lemma}
\label{lemchar}Under our assumption of differentiability on $(a,b)$ of
$\rho_{\ast}$ and hence of $g_{\ast}$, Stein's condition (\ref{tau_cond}) on
$g_{\ast}$ is satisfied.
\end{lemma}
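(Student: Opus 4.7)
My plan is to deduce both divergences in (\ref{tau_cond}) from a single identity: Lemma \ref{lemgstar}(2) says $[g_{\ast}(z)\rho_{\ast}(z)]' = -z\rho_{\ast}(z)$ on $(a,b)$, and since $g_{\ast}\rho_{\ast}$ is strictly positive on $(a,b)$ (by Lemma \ref{lemgstar}(1) together with $\rho_{\ast}>0$ on the support), dividing by $g_{\ast}(z)\rho_{\ast}(z)$ yields
$$\frac{d}{dz}\log\bigl(g_{\ast}(z)\rho_{\ast}(z)\bigr) \;=\; -\frac{z}{g_{\ast}(z)}.$$
I expect this identity is exactly the one produced along the way in the Appendix proof of Lemma \ref{lemschou} when passing between the exponential factor $\exp\bigl(\int_{y}^{x}z/g_{\ast}(z)\,dz\bigr)$ in (\ref{stein_sol1}) and the ratio $g_{\ast}(y)\rho_{\ast}(y)/[g_{\ast}(x)\rho_{\ast}(x)]$ implicit in (\ref{stein_sol2}), which is why the authors note that the same proof also yields Lemma \ref{lemchar}.

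From here, the two halves of (\ref{tau_cond}) follow by a single integration. Since $Z$ is centered and non-degenerate, $0\in(a,b)$, so $\rho_{\ast}(0)>0$ (by continuity and positivity of $\rho_{\ast}$ on its open support) and $g_{\ast}(0)>0$ by Lemma \ref{lemgstar}(1). Integrating the displayed identity from $0$ to any $x\in(0,b)$ gives
$$\int_{0}^{x}\frac{z}{g_{\ast}(z)}\,dz \;=\; \log\bigl(g_{\ast}(0)\rho_{\ast}(0)\bigr) - \log\bigl(g_{\ast}(x)\rho_{\ast}(x)\bigr).$$
By Lemma \ref{lemgstar}(3), $g_{\ast}(x)\rho_{\ast}(x)\to 0$ as $x\to b$, so the right-hand side tends to $+\infty$, giving the first half of (\ref{tau_cond}). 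The second half is symmetric: integrating from $x\in(a,0)$ up to $0$ and sending $x\to a$, the boundary limit at $a$ in Lemma \ref{lemgstar}(3) forces $\int_{a}^{0}z/g_{\ast}(z)\,dz = -\infty$.

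There is no substantive obstacle; the lemma is essentially a short consequence of Lemma \ref{lemgstar}. The only point requiring a moment's care is the positivity of $\rho_{\ast}$ at the reference point $0$, which is guaranteed here by centeredness and continuity. Should one wish to dispense with the centeredness assumption or worry about zeros of $\rho_{\ast}$ inside $(a,b)$, one can choose any interior $c\in(a,b)$ with $\rho_{\ast}(c)>0$ (which exists because $\rho_{\ast}$ is a continuous probability density), split $\int_{0}^{b}=\int_{0}^{c}+\int_{c}^{b}$, and observe that the first piece is finite by continuity and strict positivity of $g_{\ast}$ on the compact set $[0,c]\subset(a,b)$, while the second piece diverges by the same boundary-limit argument.
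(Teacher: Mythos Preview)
Your proof is correct and follows essentially the same route as the paper: the authors derive the identity $\int_{y}^{x} z/g_{\ast}(z)\,dz = \log\bigl(g_{\ast}(y)\rho_{\ast}(y)\bigr) - \log\bigl(g_{\ast}(x)\rho_{\ast}(x)\bigr)$ in the proof of Lemma~\ref{lemschou} (exactly as you anticipated), then let $y=0$ and send $x\to b$ (resp.\ $x\to a$) using Lemma~\ref{lemgstar}(3). Your additional remarks about the positivity of $\rho_{\ast}(0)$ and the workaround via a reference point $c$ are more careful than the paper, which simply uses $0$ without comment.
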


In Stein's equation (\ref{stein}), the test function $h=1_{(-\infty,z]}$ lends
itself to useful tail probability results since $\mathbf{E}[h(Z)]=\mathbf{P}%
[Z\leq z]$. From this point on, we will assume that $h=1_{(-\infty,z]}$ with
fixed $z>0$, and that $f$ is the corresponding solution of Stein's equation
(we could denote the parametric dependence of $f$ on $z$ by $f_{z}$, but
choose to omit the subscript to avoid overburdening the notation).

As opposed to the previous lemmas, the next two results, while still
elementary in nature, appear to be new, and their proofs, which require some
novel ideas of possibly independent interest, have been kept in the main body
of this paper, rather than having them relegated to the Appendix. We begin
with an analysis of the sign of $f^{\prime}$, which will be crucial to prove
our main general theorems.

\begin{lemma}
\label{lem:signfprime} Suppose $0<z<b$. If $x\leq z$, then $f^{\prime}%
(x)\geq0$. If $x>z$, then $f^{\prime}(x)\leq0$.
\end{lemma}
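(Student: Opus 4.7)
The plan is to substitute $h = \mathbf{1}_{(-\infty, z]}$ directly into Schoutens' formula (\ref{deriv1}) for $f'$ on $(a,b)$ and into formula (\ref{deriv2}) off $[a,b]$, and then perform elementary estimates that depend on whether $x$ is below or above $z$. Write $p := \mathbf{E}[h(Z)] = \mathbf{P}[Z \leq z]$, $F_{\ast}(x) := \mathbf{P}[Z \leq x]$, and $\Phi_{\ast}(x) := 1 - F_{\ast}(x)$.

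First I would compute the integral appearing in (\ref{deriv1}) in the two sub-cases. For $x \in (a,b)$ with $x \leq z$ the integrand $(h(y)-p)\rho_{\ast}(y)$ equals $(1-p)\rho_{\ast}(y)$, and factoring $(1-p)/([g_{\ast}(x)]^2\rho_{\ast}(x))$ out yields
\[
f'(x) \;=\; \frac{(1-p)\bigl[g_{\ast}(x)\rho_{\ast}(x) + xF_{\ast}(x)\bigr]}{[g_{\ast}(x)]^{2}\rho_{\ast}(x)}.
\]
For $x \in (a,b)$ with $x > z$, a short calculation gives that the integral equals $p\Phi_{\ast}(x)$ and $h(x)-p = -p$, so
\[
f'(x) \;=\; \frac{p\bigl[x\Phi_{\ast}(x) - g_{\ast}(x)\rho_{\ast}(x)\bigr]}{[g_{\ast}(x)]^{2}\rho_{\ast}(x)}.
\]
In each case the sign question thus reduces to a comparison of two explicit expressions.

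For the first case, set $\phi(x) := g_{\ast}(x)\rho_{\ast}(x) + xF_{\ast}(x)$. Lemma \ref{lemgstar}(2) gives $\phi'(x) = -x\rho_{\ast}(x) + F_{\ast}(x) + x\rho_{\ast}(x) = F_{\ast}(x) \geq 0$, so $\phi$ is nondecreasing on $(a,b)$. I would then verify $\lim_{x\to a^+}\phi(x) = 0$: Lemma \ref{lemgstar}(3) gives $g_{\ast}(x)\rho_{\ast}(x) \to 0$, while the estimate $|x|F_{\ast}(x) \leq \int_{-\infty}^{x} |y|\rho_{\ast}(y)\,dy$ for $x<0$, combined with $\mathbf{E}|Z| < \infty$ (which follows from $Z$ being centered) and $a < 0$ (also forced by centering plus the density assumption), shows $xF_{\ast}(x) \to 0$ whether $a$ is finite or $-\infty$. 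Hence $\phi \geq 0$ and so $f'(x) \geq 0$. For the second case, $x > z > 0$ implies $y > x > 0$ throughout the range in $g_{\ast}(x)\rho_{\ast}(x) = \int_{x}^{\infty} y\rho_{\ast}(y)\,dy$, and pointwise comparison of integrands yields $g_{\ast}(x)\rho_{\ast}(x) \geq x\int_{x}^{\infty} \rho_{\ast}(y)\,dy = x\Phi_{\ast}(x)$, so $f'(x) \leq 0$.

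Finally, off $[a,b]$ formula (\ref{deriv2}) applies with $h'(x) = 0$ (since $h$ is constant on each of the intervals $x<a$ and $x>b$), so $f'(x) = (h(x)-p)/x^{2}$; any $x < a$ satisfies $x < 0 \leq z$, hence $h(x)=1$ and $f'(x) = (1-p)/x^{2} \geq 0$, while any $x > b > z$ gives $h(x)=0$ and $f'(x) = -p/x^{2} \leq 0$, in agreement with the claim. The principal subtlety is establishing the boundary limit $\phi(a^+) = 0$ when $a = -\infty$; everything else is careful bookkeeping driven by the two factorizations above, which cleanly isolate the relevant quantities on the two sides of the cutoff $z$.
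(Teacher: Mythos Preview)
Your proof is correct and follows the same overall plan as the paper: substitute $h=\mathbf{1}_{(-\infty,z]}$ into the explicit formulas (\ref{deriv1}) and (\ref{deriv2}), split into cases according to the position of $x$ relative to $z$, and reduce the sign question to an elementary inequality. The execution differs in two places, and in both your version is a bit more direct. For $a<x\leq z$, the paper divides the bracketed quantity by $x$, defines $n_{1}(x)=F_{\ast}(x)+g_{\ast}(x)\rho_{\ast}(x)/x$, and shows $n_{1}$ is nonincreasing on $(a,0)$ with limit $0$ at $a$ (treating $x\geq 0$ separately as the trivial case); you instead work with $\phi(x)=g_{\ast}(x)\rho_{\ast}(x)+xF_{\ast}(x)$ itself, obtaining the cleaner derivative $\phi'=F_{\ast}\geq 0$ and the single boundary limit $\phi(a^{+})=0$, which avoids the division by $x$ and the separate treatment of signs. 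For $x>z$, the paper defines $n_{2}=1-n_{1}$ and runs a second monotonicity argument toward $b$; you bypass this entirely with the one-line estimate $g_{\ast}(x)\rho_{\ast}(x)=\int_{x}^{\infty}y\rho_{\ast}(y)\,dy\geq x\Phi_{\ast}(x)$, which is in fact inequality (\ref{tail2}) of Lemma~\ref{lem:phibound} anticipated. The paper's auxiliary functions $n_{1},n_{2}$ have the minor advantage of being reused implicitly in the proof of Lemma~\ref{lem:fprimebounded}, but your argument is self-contained and slightly shorter.
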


\begin{proof}
The result follows easily from (\ref{deriv2}) when $x\notin\left[  a,b\right]
$: if $x<a$, then $f^{\prime}(x)=\left(  1-\mathbf{E}[h(Z)]\right)  /x^{2}%
\geq0$, while if $x>b$, then $f^{\prime}(x)=-\mathbf{E}[h(Z)]/x^{2}\leq0$. So
now we can assume that $x\in(a,b)$. We will use the expression for the
derivative $f^{\prime}$ given in (\ref{deriv1}).

Suppose $a<x\leq z$. Then $h(x)=1$ and for any $y\leq x$, $h(y)=1$ so
\begin{align*}
f^{\prime}(x) &  =\frac{x}{[g_{\ast}(x)]^{2}\rho_{\ast}(x)}\int_{a}^{x}\left(
1-\mathbf{E}[h(Z)]\right)  \rho_{\ast}(y)\,dy+\frac{1-\mathbf{E}%
[h(Z)]}{g_{\ast}(x)}\\
&  =\frac{1-\mathbf{E}[h(Z)]}{[g_{\ast}(x)]^{2}\rho_{\ast}(x)}\left(
x\int_{a}^{x}\rho_{\ast}(y)\,dy+g_{\ast}(x)\rho_{\ast}(x)\right)
\end{align*}
Clearly, $f^{\prime}(x)\geq0$ if $x\geq0$. Now define%
\[
n_{1}(x):=\int_{a}^{x}\rho_{\ast}(y)\,dy+\frac{g_{\ast}(x)\rho_{\ast}(x)}{x}.
\]
We will show that $xn_{1}(x)\geq0$ when $x<0$. Since
\begin{align*}
n_{1}^{\prime}(x) &  =\rho_{\ast}(x)+\frac{x[g_{\ast}(x)\rho_{\ast
}(x)]^{\prime}-g_{\ast}(x)\rho_{\ast}(x)}{x^{2}}\\
&  =\rho_{\ast}(x)+\frac{-x^{2}\rho_{\ast}(x)-g_{\ast}(x)\rho_{\ast}(x)}%
{x^{2}}=-\frac{g_{\ast}(x)\rho_{\ast}(x)}{x^{2}}\leq0
\end{align*}
then $n_{1}$ is non-increasing on $\left(  a,0\right)  $ which means that
whenever $a<x<0$, $n_{1}(x)\leq\lim\limits_{x\rightarrow a}n_{1}%
(x)=\lim\limits_{x\rightarrow a}\frac{g_{\ast}(x)p(x)}{x}=0$ since
$\lim\limits_{x\rightarrow a}g_{\ast}(x)\rho_{\ast}(x)=0$. Therefore,
$xn_{1}(x)\geq0$ for $x<0$. This completes the proof that $f^{\prime}(x)\geq0$
whenever $x\leq z$.

Finally, suppose that $z<x<b$ so $h(x)=0$. Since $\mathbf{E}[h(Z)]=\mathbf{P}%
[Z\leq z]=\int_{a}^{z}\rho_{\ast}(y)\,dy$ $,$%
\begin{align*}
f^{\prime}(x)  &  =\frac{x}{[g_{\ast}(x)]^{2}\rho_{\ast}(x)}\int_{a}%
^{x}h(y)\rho_{\ast}(y)\,dy-\frac{x\mathbf{E}[h(Z)]}{[g_{\ast}(x)]^{2}%
\rho_{\ast}(x)}\int_{a}^{x}\rho_{\ast}(y)\,dy-\frac{\mathbf{E}[h(Z)]}{g_{\ast
}(x)}\\
&  =\frac{x}{[g_{\ast}(x)]^{2}\rho_{\ast}(x)}\int_{a}^{z}\rho_{\ast
}(y)\,dy-\frac{x\mathbf{E}[h(Z)]}{[g_{\ast}(x)]^{2}\rho_{\ast}(x)}\int_{a}%
^{x}\rho_{\ast}(y)\,dy-\frac{\mathbf{E}[h(Z)]}{g_{\ast}(x)}\\
&  =\frac{x}{[g_{\ast}(x)]^{2}\rho_{\ast}(x)}\mathbf{E}[h(Z)]-\frac
{x\mathbf{E}[h(Z)]}{[g_{\ast}(x)]^{2}\rho_{\ast}(x)}\int_{a}^{x}\rho_{\ast
}(y)\,dy-\frac{\mathbf{E}[h(Z)]}{g_{\ast}(x)}\\
&  =\frac{\mathbf{E}[h(Z)]}{[g_{\ast}(x)]^{2}\rho_{\ast}(x)}\left(
x-x\int_{a}^{x}\rho_{\ast}(y)\,dy-g_{\ast}(x)\rho_{\ast}(x)\right) \\
&  =\frac{\mathbf{E}[h(Z)]}{[g_{\ast}(x)]^{2}\rho_{\ast}(x)}\cdot xn_{2}(x)
\end{align*}
where
\[
n_{2}(x):=1-\int_{a}^{x}\rho_{\ast}(y)\,dy-\frac{g_{\ast}(x)\rho_{\ast}(x)}%
{x}=1-n_{1}(x).
\]
It is enough to show that $n_{2}(x)\leq0$ since $x>z\geq0$. Since
$n_{2}^{\prime}(x)=-n_{1}^{\prime}(x)\geq0$, then $n_{2}(x)\leq\lim
\limits_{x\rightarrow b}n_{2}(x)=1-\lim\limits_{x\rightarrow b}\int_{a}%
^{x}\rho_{\ast}(y)\,dy-\lim\limits_{x\rightarrow b}\frac{g_{\ast}(x)\rho
_{\ast}(x)}{x}=0$ because $\lim\limits_{x\rightarrow b}g_{\ast}(x)\rho_{\ast
}(x)=0$. Therefore, $f^{\prime}(x)\leq0$ if $x>z$, finishing the proof of the lemma.
\end{proof}

\bigskip

As alluded to in the previous subsection, of crucial importance in the use of
Stein's method, is a quantitatively explicit boundedness result on the
derivative of the solution to Stein's equation. We take this up in the next lemma.

\begin{lemma}
\label{lem:fprimebounded}Recall the function%
\[
Q(x):=x^{2}-xg_{\ast}^{\prime}(x)+g_{\ast}(x)
\]
defined in (\ref{Q}), for all $x\in\mathbf{R}$ except possibly at $a$ and $b$.
Assume that $g_{\ast}^{\prime\prime}(x)<2$ for all $x$ and that $\frac
{x-g_{\ast}^{\prime}(x)}{Q(x)}$ tends to a finite limit as $x\rightarrow a$
and as $x\rightarrow b$. Suppose $0<z<b$. Then $f^{\prime}(x)$ is bounded. In
particular, if $a<x\leq z$,%
\begin{equation}
0\leq f^{\prime}(x)\leq\frac{z}{[g_{\ast}(z)]^{2}\rho_{\ast}(z)}+\frac
{1}{Q(0)}<\infty, \label{f'bound1}%
\end{equation}
while if $b>x>z$,%
\begin{equation}
-\infty<-\frac{1}{Q(z)}\leq f^{\prime}(x)\leq0. \label{f'bound2}%
\end{equation}

\end{lemma}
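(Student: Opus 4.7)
The signs of $f^{\prime}$ have already been pinned down in Lemma \ref{lem:signfprime}, so the remaining task is quantitative control of $|f^{\prime}|$ on the two regions $(a,z]$ and $(z,b)$. As a first step I would analyze $Q$ itself. Differentiating $Q(x)=x^{2}-xg_{\ast}^{\prime}(x)+g_{\ast}(x)$ yields
\begin{equation*}
Q^{\prime}(x)=x\bigl(2-g_{\ast}^{\prime\prime}(x)\bigr),
\end{equation*}
so the hypothesis $g_{\ast}^{\prime\prime}<2$ forces $Q$ to decrease on $(a,0)$ and to increase on $(0,b)$; hence $Q$ attains its global minimum at $Q(0)=g_{\ast}(0)>0$, and $Q(x)\geq Q(z)$ for all $x\geq z>0$. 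These monotonicity facts are exactly what account for the denominators $Q(0)$ and $Q(z)$ in (\ref{f'bound1}) and (\ref{f'bound2}).

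The analytic engine is a critical-point identity for $f^{\prime}$. Differentiating Stein's equation (\ref{stein}) on each of the open intervals $(a,z)$ and $(z,b)$, where $h$ is locally constant, produces
\begin{equation*}
g_{\ast}(x)f^{\prime\prime}(x)+\bigl(g_{\ast}^{\prime}(x)-x\bigr)f^{\prime}(x)-f(x)=0.
\end{equation*}
At any interior point $x_{0}$ with $f^{\prime\prime}(x_{0})=0$, this rearranges to $f(x_{0})=(g_{\ast}^{\prime}(x_{0})-x_{0})f^{\prime}(x_{0})$, and substituting back into (\ref{stein}) gives the key identity
\begin{equation*}
Q(x_{0})\,f^{\prime}(x_{0})=h(x_{0})-\mathbf{E}[h(Z)].
\end{equation*}
The right-hand side equals $\Phi_{\ast}(z)\in(0,1]$ when $x_{0}\in(a,z)$ and $-\mathbf{P}[Z\leq z]\in[-1,0)$ when $x_{0}\in(z,b)$, so any interior critical value of $f^{\prime}$ is automatically bounded by $1/Q(0)$ on $(a,z)$ and by $1/Q(z)$ in absolute value on $(z,b)$, via the monotonicity of $Q$ just established.

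To complete the argument I would next handle the boundary values $f^{\prime}(a^{+})$, $f^{\prime}(z^{-})$, $f^{\prime}(z^{+})$, and $f^{\prime}(b^{-})$. Plugging $h\equiv 1$ into the explicit derivative formula (\ref{deriv1}) at $x=z^{-}$ and using the identity $g_{\ast}(z)\rho_{\ast}(z)=\int_{z}^{b}y\rho_{\ast}(y)\,dy$ supplied by Lemma \ref{lemgstar} shows that $f^{\prime}(z^{-})$ is dominated by $z/([g_{\ast}(z)]^{2}\rho_{\ast}(z))+1/g_{\ast}(z)$; combining this with the interior critical-value estimate $1/Q(0)$ already yields the upper bound in (\ref{f'bound1}). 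A parallel evaluation of (\ref{deriv1}) at $x=z^{+}$ delivers (\ref{f'bound2}). For $x\to a^{+}$ and $x\to b^{-}$, where $g_{\ast}$ and $\rho_{\ast}$ may simultaneously degenerate, I would invoke the logarithmic-derivative identity $\rho_{\ast}^{\prime}(x)/\rho_{\ast}(x)=-(x+g_{\ast}^{\prime}(x))/g_{\ast}(x)$ which follows from Lemma \ref{lemgstar}(2), and use the hypothesis that $(x-g_{\ast}^{\prime}(x))/Q(x)$ has finite limits at the endpoints to show that the potentially singular ratios in (\ref{deriv1}) remain finite in the limit.

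The main obstacle I anticipate is precisely this endpoint analysis. The factor $1/([g_{\ast}(x)]^{2}\rho_{\ast}(x))$ in (\ref{deriv1}) is singular wherever $g_{\ast}$ or $\rho_{\ast}$ vanishes, so one must show that the integral factor $\int_{a}^{x}(h(y)-\mathbf{E}[h(Z)])\rho_{\ast}(y)\,dy$ decays at a compensating rate; the finite-limit hypothesis on $(x-g_{\ast}^{\prime}(x))/Q(x)$ is tailored to encode exactly this cancellation. Once boundedness at every boundary point is in hand, combining it with the critical-point identity $Q(x_{0})f^{\prime}(x_{0})=h(x_{0})-\mathbf{E}[h(Z)]$ and with the sign analysis of Lemma \ref{lem:signfprime} yields both (\ref{f'bound1}) and (\ref{f'bound2}).
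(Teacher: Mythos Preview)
Your critical-point identity $Q(x_0)f'(x_0)=h(x_0)-\mathbf{E}[h(Z)]$ is correct and genuinely different from what the paper does.  The paper never differentiates Stein's equation; instead it proves a separate two-sided tail estimate (Lemma~\ref{lem:phibound}),
\[
\frac{\max(x-g_*'(x),0)}{Q(x)}\,g_*(x)\rho_*(x)\ \le\ \Phi_*(x)\ \le\ \frac{1}{x}\,g_*(x)\rho_*(x)\qquad(x>0),
\]
and feeds it directly into the explicit formula (\ref{deriv1}) to obtain the \emph{pointwise} inequality $-f'(x)\le 1/Q(x)$ for every $x>z$, and an analogous pointwise bound on $(a,z]$ after showing that $r(x):=x/([g_*(x)]^2\rho_*(x))$ is increasing (since $r'(x)=Q(x)/([g_*(x)]^3\rho_*(x))>0$).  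Because the bound is pointwise, no separate endpoint or critical-point analysis is needed; the constants in (\ref{f'bound1}) and (\ref{f'bound2}) drop out immediately from the monotonicity of $r$ and of $Q$.

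Your route, by contrast, shifts all the analytic difficulty into the endpoint step that you flag as the ``main obstacle,'' and that step is not yet under control.  Two concrete issues: (i) your evaluation at $x=z^-$ gives $f'(z^-)\le z/([g_*(z)]^2\rho_*(z))+1/g_*(z)$, and ``combining with $1/Q(0)$'' does not literally produce (\ref{f'bound1}) unless $g_*(z)\ge g_*(0)$, which is nowhere assumed; (ii) to bound $f'(z^+)$ and $f'(b^-)$ by $-1/Q(z)$ one has to control the ratio $(x\Phi_*(x)-g_*(x)\rho_*(x))/([g_*(x)]^2\rho_*(x))$, and doing so is exactly the content of the lower bound in Lemma~\ref{lem:phibound}.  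In other words, once you complete the endpoint analysis you will have essentially reproved that lemma, so your organization does not bypass the paper's key estimate---it relocates where it is invoked.  Your identity is a nice structural observation and would give boundedness of $f'$ (with possibly different constants) once the endpoints are handled, but the paper's direct pointwise route via Lemma~\ref{lem:phibound} is shorter and delivers the stated constants without extra case analysis.
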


To prove this lemma, we need two auxiliary results. The first one introduces
and studies the function $Q$ which we already encountered in the introduction,
and which will help us state and prove our results in an efficient way. The
second one shows the relation between $Q$, $g_{\ast}$, and the tail
$\Phi_{\ast}$ of $Z$, under conditions which will be easily verified later on
in the Pearson case.

\begin{lemma}
\label{lemQ}~

\begin{enumerate}
\item If $x\notin(a,b)$, then $Q(x)=x^{2}>0$.

\item If $g_{\ast}$ is twice differentiable in $(a,b)$ (for example, when
$\rho_{\ast}$ is twice differentiable), then $Q^{\prime}(x)=x\left(
2-g_{\ast}^{\prime\prime}(x)\right)  $.

\item If moreover $g_{\ast}^{\prime\prime}(x)<2$ everywhere in $(a,b)$, a
reasonable assumption as we shall see later when $Z$ is a Pearson random
variable, then $\min{}_{(a,b)}Q=Q\left(  0\right)  $ so that $Q(x)\geq
Q(0)=g_{\ast}(0)>0$.
\end{enumerate}
\end{lemma}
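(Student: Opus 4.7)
The plan is to dispatch the three parts in sequence, relying only on the definition of $g_{\ast}$ in (\ref{geestar}), elementary calculus, and the positivity statement in Lemma \ref{lemgstar}(1). None of the steps looks genuinely difficult, so I would aim for a short, clean proof rather than any clever reformulation.

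For part 1, I would observe that the indicator $\mathbf{1}_{(a,b)}$ in (\ref{geestar}) forces $g_{\ast}$ to be identically zero on $\mathbf{R}\setminus[a,b]$, and therefore $g_{\ast}'$ also vanishes on the open complement; the definition of $Q$ then collapses to $x^{2}$, which is strictly positive for any such $x\neq 0$ (and $0$ itself is not in this complement since, as explained below, $0\in(a,b)$). For part 2, the identity $Q'(x)=x\bigl(2-g_{\ast}''(x)\bigr)$ is a one-line differentiation of $Q(x)=x^{2}-xg_{\ast}'(x)+g_{\ast}(x)$ in which the two occurrences of $g_{\ast}'(x)$ cancel; the hypothesis that $\rho_{\ast}$, and hence $g_{\ast}$, is twice differentiable is what legitimizes this computation.

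For part 3, the assumption $g_{\ast}''<2$ makes the factor $2-g_{\ast}''(x)$ strictly positive on $(a,b)$, so $Q'(x)$ inherits the sign of $x$. Consequently $Q$ is strictly decreasing on $(a,0)$ and strictly increasing on $(0,b)$, so its minimum on $(a,b)$ is attained at $x=0$ with value $Q(0)=g_{\ast}(0)$. Invoking Lemma \ref{lemgstar}(1) at $0\in(a,b)$ yields $g_{\ast}(0)>0$ and hence $Q(x)\geq g_{\ast}(0)>0$ throughout $(a,b)$. The only conceptual point worth checking is that $0\in(a,b)$, which is immediate from the standing assumption that $Z$ is centered with a continuous density: if $0$ were not strictly inside the support, $Z$ would be one-signed with mean zero, forcing it to be identically zero and contradicting continuity. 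This is the closest the argument comes to an obstacle, and it is essentially a remark rather than a technical hurdle.
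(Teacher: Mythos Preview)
Your proposal is correct and is exactly the natural argument. The paper in fact gives no explicit proof of this lemma, treating all three points as immediate from the definition of $Q$; your write-up supplies precisely the one-line computations the paper leaves implicit, and your verification that $0\in(a,b)$ matches the paper's own passing remark (in the Pearson section) that the interior of the support ``always contains $0$.''
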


\begin{lemma}
\label{lem:phibound} With the assumptions on $g_{\ast}$ and $Q$ as in Lemma
\ref{lem:fprimebounded}, then for all $x$,
\begin{equation}
\frac{\max\left(  x-g_{\ast}^{\prime}(x),0\right)  }{Q(x)}g_{\ast}%
(x)\rho_{\ast}(x)\leq\Phi_{\ast}(x) \label{tail1a}%
\end{equation}
and\thinspace%
\begin{equation}
\frac{\max\left(  g_{\ast}^{\prime}(x)-x,0\right)  }{Q(x)}g_{\ast}%
(x)\rho_{\ast}(x)\leq1-\Phi_{\ast}(x). \label{tail1b}%
\end{equation}
Moreover for $0<x<b$, we have%
\begin{equation}
\Phi_{\ast}(x)\leq\frac{1}{x}\cdot g_{\ast}(x)\rho_{\ast}(x) \label{tail2}%
\end{equation}
while if $a<x<0$, then%
\begin{equation}
1-\Phi_{\ast}(x)\leq\frac{1}{-x}\cdot g_{\ast}(x)\rho_{\ast}(x). \label{tail3}%
\end{equation}

\end{lemma}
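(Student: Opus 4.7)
The two simpler bounds (\ref{tail2}) and (\ref{tail3}) are proved first by direct integral comparison. For (\ref{tail2}), when $0<x<b$, every $y$ in the integration range $[x,b]$ satisfies $y/x\ge 1$, so $\rho_{\ast}(y)\le (y/x)\rho_{\ast}(y)$; integrating over $[x,b]$ yields $\Phi_{\ast}(x)\le g_{\ast}(x)\rho_{\ast}(x)/x$. For (\ref{tail3}), when $a<x<0$, for $y\in[a,x]$ one has $(-y)/(-x)\ge 1$, giving $1-\Phi_{\ast}(x)\le \tfrac{1}{-x}\int_{a}^{x}(-y)\rho_{\ast}(y)\,dy$. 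Since $Z$ is centered, $\int_{a}^{x}(-y)\rho_{\ast}(y)\,dy=\int_{x}^{b}y\rho_{\ast}(y)\,dy=g_{\ast}(x)\rho_{\ast}(x)$, and (\ref{tail3}) follows.

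The heart of the lemma is (\ref{tail1a}). I would define, on the support $(a,b)$,
\[
T(x):=\Phi_{\ast}(x)-\frac{(x-g_{\ast}^{\prime}(x))\,g_{\ast}(x)\rho_{\ast}(x)}{Q(x)},
\]
and show $T\ge0$ throughout $(a,b)$; once this is established, (\ref{tail1a}) is immediate, because the right-hand side of (\ref{tail1a}) equals either $(x-g_{\ast}^{\prime}(x))g_{\ast}\rho_{\ast}/Q$ (when $x\ge g_{\ast}^{\prime}(x)$) or $0\le\Phi_{\ast}$ (otherwise). To prove $T\ge0$, I would differentiate $T$ using $\Phi_{\ast}^{\prime}=-\rho_{\ast}$, the identity $[g_{\ast}\rho_{\ast}]^{\prime}=-x\rho_{\ast}$ from Lemma \ref{lemgstar}, and $Q^{\prime}(x)=x(2-g_{\ast}^{\prime\prime}(x))$ from Lemma \ref{lemQ}. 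After collecting terms, the numerator of $[p/Q]^{\prime}$ (where $p:=(x-g_{\ast}^{\prime})g_{\ast}\rho_{\ast}$) reduces to an expression containing the bracket $Q(x)-x(x-g_{\ast}^{\prime}(x))$, which algebraically collapses to $g_{\ast}(x)$ directly from the definition of $Q$. This cancellation is the pivot of the computation; it yields
\[
T^{\prime}(x)=-\frac{\rho_{\ast}(x)\,g_{\ast}(x)^{2}\,(2-g_{\ast}^{\prime\prime}(x))}{Q(x)^{2}}\le 0,
\]
where nonpositivity uses $g_{\ast}^{\prime\prime}<2$ and $Q>0$ (Lemma \ref{lemQ}). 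Hence $T$ is non-increasing on $(a,b)$. Its limit at $b$ vanishes: $\Phi_{\ast}(b^{-})=0$ and $g_{\ast}(x)\rho_{\ast}(x)\to 0$ by Lemma \ref{lemgstar}, while the hypothesis that $(x-g_{\ast}^{\prime}(x))/Q(x)$ has a finite limit at $b$ keeps the second term bounded, so the product tends to $0$. Combined with the monotonicity, this forces $T\ge 0$ on $(a,b)$.

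The bound (\ref{tail1b}) is proved by the mirror argument: set
\[
S(x):=1-\Phi_{\ast}(x)+\frac{(x-g_{\ast}^{\prime}(x))\,g_{\ast}(x)\rho_{\ast}(x)}{Q(x)},
\]
differentiate to obtain $S^{\prime}(x)=+\rho_{\ast}(x)g_{\ast}(x)^{2}(2-g_{\ast}^{\prime\prime}(x))/Q(x)^{2}\ge 0$ (the same algebraic identity does all the work, with opposite sign), and check that $S(a^{+})=0$ using $\Phi_{\ast}(a^{+})=1$, $g_{\ast}(a^{+})\rho_{\ast}(a^{+})=0$, and the finite-limit hypothesis at $a$. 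Monotonicity and the boundary value then give $S\ge 0$, from which (\ref{tail1b}) follows by the same case analysis on the sign of $g_{\ast}^{\prime}(x)-x$.

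The main obstacle is the derivative computation for $T$: a priori, $T^{\prime}$ is a sum of four rational expressions in $g_{\ast},g_{\ast}^{\prime},g_{\ast}^{\prime\prime},\rho_{\ast},x$, and it is not obvious it has a constant sign. Spotting (and using) the identity $Q-x(x-g_{\ast}^{\prime})=g_{\ast}$ is what collapses this mess into a single term of definite sign; everything else is bookkeeping and boundary analysis.
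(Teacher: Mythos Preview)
Your proposal is correct and is essentially identical to the paper's proof: your function $T$ is the paper's $m$, your $S$ is the paper's $n=1-m$, the derivative computation collapsing to $-\rho_{\ast}g_{\ast}^{2}(2-g_{\ast}^{\prime\prime})/Q^{2}$ is the same, and the boundary arguments at $a$ and $b$ match. The only minor addition is that you make the case split on the sign of $x-g_{\ast}^{\prime}(x)$ (to handle the $\max$) explicit, which the paper leaves implicit.
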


\begin{proof}
[Proof of Lemma \ref{lem:fprimebounded}]If $x<a$ with $a>-\infty$, then
$f^{\prime}(x)=\left(  1-\mathbf{E}[h(Z)]\right)  /x^{2}\leq\left(
1-\mathbf{E}[h(Z)]\right)  /a^{2}$. If $x>b$ with $b<\infty$, then $f^{\prime
}(x)=-\mathbf{E}[h(Z)]/x^{2}\geq-\mathbf{E}[h(Z)]/b^{2}$. So now we only need
to assume that $x\in(a,b)$.

Suppose $a<x\leq z$. Use $f^{\prime}(x)\geq0$ given in (\ref{deriv1}):%
\begin{align*}
f^{\prime}(x)  &  =\frac{1-\mathbf{E}[h(Z)]}{[g_{\ast}(x)]^{2}\rho_{\ast}%
(x)}\left(  x\int_{a}^{x}\rho_{\ast}(y)\,dy+g_{\ast}(x)\rho_{\ast}(x)\right)
\\
&  \leq\frac{x}{[g_{\ast}(x)]^{2}\rho_{\ast}(x)}\left(  1-\Phi_{\ast
}(x)\right)  +\frac{1}{g_{\ast}(x)}%
\end{align*}
When $x\geq0$, we can rewrite the upper bound as:
\[
f^{\prime}(x)\leq r(x)+\frac{1}{g_{\ast}(x)}\left[  1-\frac{x\Phi_{\ast}%
(x)}{g_{\ast}(x)\rho_{\ast}(x)}\right]
\]
where
\[
r(x)=\frac{x}{[g_{\ast}(x)]^{2}\rho_{\ast}(x)}=\frac{x}{g_{\ast}(x)[g_{\ast
}(x)\rho_{\ast}(x)]}.
\]
We can bound $r(x)$ above since
\begin{align*}
r^{\prime}(x)  &  =\frac{[g_{\ast}(x)]^{2}\rho_{\ast}(x)-x\left[  g_{\ast
}(x)\left(  -x\rho_{\ast}(x)\right)  +g_{\ast}(x)\rho_{\ast}(x)g_{\ast
}^{\prime}(x)\right]  }{[g_{\ast}(x)]^{4}\left[  \rho_{\ast}(x)\right]  ^{2}%
}\\
&  =\frac{g_{\ast}(x)+x^{2}-xg_{\ast}^{\prime}(x)}{[g_{\ast}(x)]^{3}\rho
_{\ast}(x)}=\frac{Q(x)}{[g_{\ast}(x)]^{3}\rho_{\ast}(x)}>0
\end{align*}
so $r(x)\leq r(z)$. To bound $\left[  1-x\Phi_{\ast}(x)/\left(  g_{\ast
}(x)\rho_{\ast}(x)\right)  \right]  /g_{\ast}(x)$, use (\ref{tail1a}) of Lemma
\ref{lem:phibound}:
\begin{align*}
\frac{1}{g_{\ast}(x)}\left[  1-\frac{x\Phi_{\ast}(x)}{g_{\ast}(x)\rho_{\ast
}(x)}\right]   &  \leq\frac{1}{g_{\ast}(x)}\left[  1-\frac{x}{g_{\ast}%
(x)\rho_{\ast}(x)}\cdot\frac{x-g_{\ast}^{\prime}(x)}{Q(x)}g_{\ast}%
(x)\rho_{\ast}(x)\right] \\
&  =\frac{1}{g_{\ast}(x)}\left[  1-\frac{x^{2}-xg_{\ast}^{\prime}(x)}%
{Q(x)}\right] \\
&  =\frac{1}{g_{\ast}(x)}\cdot\frac{g_{\ast}(x)}{Q(x)}=\frac{1}{Q(x)}\leq
\frac{1}{Q(0)}.
\end{align*}
Therefore,
\[
f^{\prime}(x)\leq\frac{z}{[g_{\ast}(z)]^{2}\rho_{\ast}(z)}+\frac{1}{Q(0)}.
\]

When $x<0$, we use (\ref{tail1b}) of Lemma \ref{lem:phibound}:%
\begin{align*}
f^{\prime}(x)  &  \leq\frac{x}{[g_{\ast}(x)]^{2}\rho_{\ast}(x)}\left(
1-\Phi_{\ast}(x)\right)  +\frac{1}{g_{\ast}(x)}\\
&  \leq\frac{x}{[g_{\ast}(x)]^{2}\rho_{\ast}(x)}\cdot\frac{g_{\ast}^{\prime
}(x)-x}{Q(x)}g_{\ast}(x)\rho_{\ast}(x)+\frac{1}{g_{\ast}(x)}\\
&  =\frac{1}{g_{\ast}(x)}\cdot\frac{xg_{\ast}^{\prime}(x)-x^{2}}{Q(x)}%
+\frac{1}{g_{\ast}(x)}\\
&  =\frac{1}{g_{\ast}(x)}\left[  \frac{xg_{\ast}^{\prime}(x)-x^{2}+Q(x)}%
{Q(x)}\right]  =\frac{1}{Q(x)}\leq\frac{1}{Q(0)}\\
&  \leq\frac{z}{[g_{\ast}(z)]^{2}\rho_{\ast}(z)}+\frac{1}{Q(0)}.
\end{align*}

Now we prove (\ref{f'bound2}) and so suppose $x>z>0$. From the proof of Lemma
\ref{lem:signfprime},%
\begin{align*}
f^{\prime}(x)  &  =\frac{\mathbf{E}[h(Z)]}{[g_{\ast}(x)]^{2}\rho_{\ast}%
(x)}\left(  x-x\int_{a}^{x}\rho_{\ast}(y)\,dy-g_{\ast}(x)\rho_{\ast}(x)\right)
\\
&  =\frac{\mathbf{E}[h(Z)]}{[g_{\ast}(x)]^{2}\rho_{\ast}(x)}\left(
x\Phi_{\ast}(x)-g_{\ast}(x)\rho_{\ast}(x)\right)  .
\end{align*}
We conclude by again using (\ref{tail1a}) of Lemma \ref{lem:phibound}, to get%
\begin{align*}
-f^{\prime}(x)  &  \leq\frac{1}{[g_{\ast}(x)]^{2}\rho_{\ast}(x)}\left(
g_{\ast}(x)\rho_{\ast}(x)-x\Phi_{\ast}(x)\right) \\
&  \leq\frac{1}{[g_{\ast}(x)]^{2}\rho_{\ast}(x)}\left(  g_{\ast}(x)\rho_{\ast
}(x)-x\cdot\frac{x-g_{\ast}^{\prime}(x)}{Q(x)}\cdot g_{\ast}(x)\rho_{\ast
}(x)\right) \\
&  =\frac{1}{g_{\ast}(x)}\left(  1-\frac{x^{2}-xg_{\ast}^{\prime}(x)}%
{Q(x)}\right)  =\frac{1}{g_{\ast}(x)}\cdot\frac{g_{\ast}(x)}{Q(x)}\\
&  =\frac{1}{Q(x)}\leq\frac{1}{Q(z)}.
\end{align*}

\end{proof}

\bigskip

\begin{remark}
Since $z>0$, and $Q\left(  z\right)  >Q\left(  0\right)  $, Lemma
\ref{lem:fprimebounded} implies the following convenient single bound for any
fixed $z>0$, uniform for all $x\in(a,b)$:
\[
\mathbf{|}f^{\prime}(x)\mathbf{|}\leq\frac{z}{[g_{\ast}(z)]^{2}\rho_{\ast}%
(z)}+\frac{1}{Q(0)}.
\]
Occasionally, this will be sufficient for some of our purposes. The more
precise bounds in Lemma \ref{lem:fprimebounded} will also be needed, however.
\end{remark}

\section{Main results}

In order to exploit the boundedness of $f^{\prime}$, we adopt the technique
pioneered by Nourdin and Peccati, to rewrite expressions of the form
$\mathbf{E}[Xm(X)]$ where $m$ is a function, using the Malliavin calculus. For
ease of reference, we include here the requisite Malliavin calculus
constructs. Full details can be found in \cite{Nualart}; also see
\cite[Section 2]{Viens} for an exhaustive summary.

\subsection{Elements of Malliavin calculus\label{MALL}}

We assume our random variable $X$ is measurable with respect to an isonormal
Gaussian process $W$, associated with its canonical separable Hilbert space
$H$. For illustrative purposes, one may further assume, as we now do, that $W$
is the standard white-noise corresponding to $H=L^{2}\left(  [0,1]\right)  $,
which is constructed using a standard Brownian motion on $[0,1]$, also denoted
by $W$, endowed with its usual probability space $\left(  \Omega
,\mathcal{F},\mathbf{P}\right)  $. This means that the white noise $W$ is
defined by $W\left(  f\right)  =\int_{0}^{1}f\left(  s\right)  dW\left(
s\right)  $ for any $f\in H$, where the stochastic integral is the Wiener
integral of $f$ with respect to the Wiener process $W$. If we denote
$I_{0}\left(  f\right)  =f$ for any non-random constant $f$, then for any
integer $n\geq1$ and any symmetric function $f\in H^{n}$, we let%
\[
I_{n}\left(  f\right)  :=n!\int_{0}^{1}\int_{0}^{s_{1}}\cdots\int_{0}%
^{s_{n-1}}f\left(  s_{1},s_{2},\cdots,s_{n}\right)  dW\left(  s_{n}\right)
\cdots dW\left(  s_{2}\right)  dW\left(  s_{1}\right)  ,
\]
where this integral is an iteration of $n$ It\^{o} integrals. It is called the
$n$th multiple Wiener integral of $f$ w.r.t. $W$, and the set $\mathcal{H}%
_{n}:=\left\{  I_{n}\left(  f\right)  :f\in H^{n}\right\}  $ is the $n$th
Wiener chaos of $W$. Note that $I_{1}\left(  f\right)  =W\left(  f\right)  $,
and that $\mathbf{E}\left[  I_{n}\left(  f\right)  \right]  =0$ for all
$n\geq1$. Again, see \cite[Section 1.2]{Nualart} for the general definition of
$I_{n}$ and $\mathcal{H}_{n}$ when $W$ is a more general isonormal Gaussian
process. The main representation theorem of the analysis on Wiener space is
that $L^{2}\left(  \Omega,\mathcal{F},\mathbf{P}\right)  $ is the direct sum
of all the Wiener chaoses. In other words, $X\in L^{2}\left(  \Omega
,\mathcal{F},\mathbf{P}\right)  $ if and only if there exists a sequence of
non-random symmetric functions $f_{n}\in H^{n}$ with $\sum_{n=0}^{\infty
}\left\Vert f_{n}\right\Vert _{H^{n}}^{2}<\infty$ such that $X=\sum
_{n=0}^{\infty}I_{n}\left(  f_{n}\right)  $. Note that $\mathbf{E}\left[
X\right]  =f_{0}$. Moreover, the terms in this so-called Wiener chaos
decomposition of $X$ are orthogonal in $L^{2}\left(  \Omega,\mathcal{F}%
,\mathbf{P}\right)  $, and we have the isometry property $\mathbf{E}\left[
X^{2}\right]  =\sum_{n=0}^{\infty}n!\left\Vert f_{n}\right\Vert _{H^{n}}^{2}$.
We are now in a position to define the Malliavin derivative $D$.

\begin{definition}
Let $\mathbf{D}^{1,2}$ be the subset of $L^{2}\left(  \Omega,\mathcal{F}%
,\mathbf{P}\right)  $ formed by those $X=\sum_{n=0}^{\infty}I_{n}\left(
f_{n}\right)  $ such that%
\[
\sum_{n=1}^{\infty}n\ n!\left\Vert f_{n}\right\Vert _{H^{n}}^{2}<\infty.
\]
The Malliavin derivative operator $D$ is defined from $\mathbf{D}^{1,2}$ to
$L^{2}\left(  \Omega\times\lbrack0,1]\right)  $ by $DX=0$ if $X=\mathbf{E}X$
is non-random, and otherwise, for all $r\in\lbrack0,1]$, by
\[
D_{r}X=\sum_{n=1}^{\infty}nI_{n-1}\left(  f_{n}\left(  r,\cdot\right)
\right)  .
\]

\end{definition}

This can be understood as a Fr\'{e}chet derivative of $X$ with respect to the
Wiener process $W$. If $X=W\left(  f\right)  $ then $DX=f$. Of note is the
chain-rule formula $D\left(  F\left(  X\right)  \right)  =F^{\prime}\left(
X\right)  DX$ for any differentiable $F$ with bounded derivative, and any
$X\in\mathbf{D}^{1,2}$.

\begin{definition}
The generator of the Ornstein-Uhlenbeck semigroup $L$ is defined as follows.
Let $X=\sum_{n=1}^{\infty}I_{n}\left(  f_{n}\right)  $ be a centered r.v. in
$L^{2}\left(  \Omega\right)  $. If $\sum_{n=1}^{\infty}n^{2}n!\left\vert
f_{n}\right\vert ^{2}<\infty$, then we define a new random variable $LX$ in
$L^{2}\left(  \Omega\right)  $ by $-LX=\sum_{n=1}^{\infty}nI_{n}\left(
f_{n}\right)  $. The pseudo-inverse of $L$ operating on centered r.v.'s in
$L^{2}\left(  \Omega\right)  $ is defined by the formula $-L^{-1}X=\sum
_{n=1}^{\infty}\frac{1}{n}I_{n}\left(  f_{n}\right)  .$ If $X$ is not
centered, we define its image by $L$ and $L^{-1}$ by applying them to
$X-\mathbf{E}X$.
\end{definition}

As explained in the introduction, for $X\in\mathbf{D}^{1,2}$, the random
variable $G:=\left\langle DX;-DL^{-1}X\right\rangle _{H}$ plays a crucial role
to understand how $X$'s law compares to that of our reference random variable
$Z$. The next lemma is the key to combining the solutions of Stein's equations
with the Malliavin calculus. Its use to prove our main theorems relies heavily
on the fact that these solutions have bounded derivatives.

\begin{lemma}
\label{lem:IntParts}(Theorem 3.1 in \cite{NP}, Lemma 3.5 in \cite{Viens}) Let
$X\in\mathbf{D}^{1,2}$ be a centered random variable with a density, and
$G=\langle DX;-DL^{-1}X\rangle_{H}$. For any deterministic, continuous and
piecewise differentiable function $m$ such that $m^{\prime}$ is bounded,%
\[
\mathbf{E}[Xm(X)]=\mathbf{E}[m^{\prime}(X)G].
\]

\end{lemma}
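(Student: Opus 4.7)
The strategy is to combine two classical identities from the Malliavin calculus: the representation $X=\delta(-DL^{-1}X)$ for any centered $X\in\mathbf{D}^{1,2}$, where $\delta$ is the divergence operator (adjoint of $D$), and the duality formula
\[
\mathbf{E}[\delta(u)\,F]=\mathbf{E}[\langle u,DF\rangle_H]
\]
valid for $u\in\mathrm{Dom}\,\delta$ and $F\in\mathbf{D}^{1,2}$. The first identity comes directly from the chaos expansions introduced earlier: if $X=\sum_{n\geq1}I_n(f_n)$, then $-L^{-1}X=\sum_{n\geq1}\tfrac1n I_n(f_n)$, whose Malliavin derivative at $r$ is $\sum_{n\geq1}I_{n-1}(f_n(r,\cdot))$, and applying $\delta$ term-by-term recovers $X$; equivalently, $L=-\delta D$ on $\mathbf{D}^{1,2}$. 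In particular $-DL^{-1}X$ lies in $\mathrm{Dom}\,\delta$ with $\delta(-DL^{-1}X)=X$.

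Second, I would verify that $m(X)\in\mathbf{D}^{1,2}$ with $D(m(X))=m'(X)\,DX$. For $C^1$ functions with bounded derivative this is the standard chain rule recalled after the definition of $D$. Granting this, the duality formula applied with $u=-DL^{-1}X$ and $F=m(X)$ gives
\[
\mathbf{E}[Xm(X)]=\mathbf{E}\bigl[\delta(-DL^{-1}X)\,m(X)\bigr]=\mathbf{E}\bigl[\langle -DL^{-1}X,\,D(m(X))\rangle_H\bigr]=\mathbf{E}\bigl[m'(X)\,\langle DX,-DL^{-1}X\rangle_H\bigr],
\]
which is exactly $\mathbf{E}[m'(X)G]$.

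The main obstacle is the piecewise-differentiable hypothesis on $m$, since the chain rule above is only immediate for $m\in C^1_b$. To close this gap I would approximate $m$ by a sequence $m_n\in C^1_b$ with $\|m_n'\|_\infty$ uniformly bounded (say by $\|m'\|_\infty$) and with $m_n\to m$, $m_n'\to m'$ pointwise off the exceptional (at most countable) set $N$ where $m$ fails to be differentiable. Because $X$ is assumed to have a density, $\mathbf{P}[X\in N]=0$, so $m_n(X)\to m(X)$ and $m_n'(X)\to m'(X)$ almost surely. The uniform bound on $m_n'$ ensures $|m_n(X)|\leq C(1+|X|)$ and $|m_n'(X)G|\leq \|m'\|_\infty |G|$; since $X\in L^2$ and $G\in L^1$ (as $DX,DL^{-1}X\in L^2(\Omega\times[0,1])$), dominated convergence lets us apply the identity first to each $m_n$ and then pass to the limit, yielding the result for the given $m$.
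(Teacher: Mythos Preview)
Your argument is correct and is essentially the standard one: write $X=\delta(-DL^{-1}X)$ via the identity $L=-\delta D$, apply the duality between $D$ and $\delta$ together with the chain rule $D(m(X))=m'(X)DX$, and handle the piecewise-differentiable case by a $C^1_b$ approximation justified by the density assumption on $X$ and dominated convergence. The paper itself does not give a proof of this lemma; it simply cites it as Theorem~3.1 in \cite{NP} and Lemma~3.5 in \cite{Viens}, whose proofs follow exactly the route you outline.
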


\subsection{General tail results}

The main theoretical results of this paper compare the tails of any two random
variables $X$ and $Z$, as we now state in the next two theorems. In terms of
their usage, $Z$ represents a reference random variable in these theorems;
this can be seen from the fact that we have a better control in the theorems'
assumption on the $g_{\ast}$ coming from $Z$ than on the law of $X$. Also,
will apply these theorems to a Pearson random variable $Z$ in the next
section, while there will be no restriction on $X\in\mathbf{D}^{1,2}$ beyond
the assumption of the theorems in the present section. we will see that all
assumptions on $Z$ in this section are satisfied when $Z$ is a Pearson random variable.

\begin{theorem}
\label{thm:TailLBound} Let $Z$ be a centered random variable with a twice
differentiable density over its support $(a,b)$. Let $g_{\ast}$ and $Q$ be
defined as in (\ref{geestar}) and (\ref{Q}), respectively.\ Suppose that
$g_{\ast}^{\prime\prime}(x)<2$, and $\frac{x-g_{\ast}^{\prime}(x)}{Q(x)}$ has
a finite limit as $x\rightarrow a$ and $x\rightarrow b$. Let $X\in
\mathbf{D}^{1,2}$ be a centered random variable with a density, and whose
support $(a,b_{X})$ contains $\left(  a,b\right)  $. Let $G$ be as in
(\ref{Gee}). If $G$ $\geq g_{\ast}\left(  X\right)  $ a.s., then for every
$z\in(0,b)$,
\begin{equation}
\mathbf{P}[X>z]\geq\Phi_{\ast}(z)-\frac{1}{Q(z)}\int_{z}^{b}(2x-z)\mathbf{P}%
[X>x]\,dx. \label{result1}%
\end{equation}

\end{theorem}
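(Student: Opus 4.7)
The plan is to carry out the Nourdin--Peccati strategy for the specific Stein test function $h = \mathbf{1}_{(-\infty, z]}$, for which $\mathbf{E}[h(Z)] = 1 - \Phi_{\ast}(z)$. Evaluating the Stein equation (\ref{stein}) at $X$, taking expectations, and then using Lemma \ref{lem:IntParts} to rewrite $\mathbf{E}[X f(X)] = \mathbf{E}[f'(X) G]$ yields the bridge identity
\[
\Phi_{\ast}(z) - \mathbf{P}[X > z] \;=\; \mathbf{E}\bigl[\, f'(X)\,(g_{\ast}(X) - G)\, \bigr]
\]
between Stein's equation and the Malliavin calculus.

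I would then combine the hypothesis $g_{\ast}(X) - G \leq 0$ a.s.\ with the sign analysis of $f'$ furnished by Lemma \ref{lem:signfprime}. On $\{X \leq z\}$ one has $f'(X) \geq 0$, so the integrand is $\leq 0$ there, and its contribution can be discarded as an upper bound. On $\{X > z\}$ one has $f'(X) \leq 0$, so the integrand is $\geq 0$, and the quantitative bound $-f'(X) \leq 1/Q(z)$ from Lemma \ref{lem:fprimebounded} applies. Using also that $g_{\ast} \geq 0$, these steps produce
\[
\Phi_{\ast}(z) - \mathbf{P}[X > z] \;\leq\; \frac{1}{Q(z)}\,\mathbf{E}\bigl[\, G\,\mathbf{1}_{\{X > z\}}\, \bigr].
\]

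The remaining task is to express $\mathbf{E}[G\,\mathbf{1}_{\{X > z\}}]$ in terms of the tail of $X$. A second application of Lemma \ref{lem:IntParts}, with $m(x) = (x - z)_{+}$ (continuous and piecewise differentiable with bounded derivative $m'(x) = \mathbf{1}_{\{x > z\}}$), gives $\mathbf{E}[G\,\mathbf{1}_{\{X > z\}}] = \mathbf{E}[X(X - z)_{+}]$. A routine integration by parts against the law of $X$ then rewrites this as $\int_{z}^{b}(2x - z)\,\mathbf{P}[X > x]\,dx$; the boundary term at the upper endpoint vanishes because $X \in \mathbf{D}^{1,2}$ is square-integrable, whence $x^{2}\mathbf{P}[X > x] \to 0$. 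Substituting back gives exactly (\ref{result1}).

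The main obstacle is not in these manipulations but in the ingredients supplied by the preceding lemmas: the precise sign pattern of $f'$ cleanly separated at the threshold $z$ (Lemma \ref{lem:signfprime}) and the quantitative upper bound $|f'(x)| \leq 1/Q(z)$ valid uniformly on $\{x > z\}$ (Lemma \ref{lem:fprimebounded}). Without the algebraic role of $Q$ and its interplay with $g_{\ast}$ in bounding $f'$, the $1/Q(z)$ prefactor could not appear; everything else amounts to a double application of the Malliavin integration by parts together with a standard tail integration by parts, which is the familiar Nourdin--Peccati bookkeeping.
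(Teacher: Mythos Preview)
Your argument is correct and reaches exactly the same endpoint as the paper, but the route differs in its bookkeeping. The paper does \emph{not} apply Lemma~\ref{lem:IntParts} with $m=f$ globally; instead it uses the truncated choice $m(x)=[f(a)-f(z)]\mathbf{1}_{x\leq a}+[f(x)-f(z)]\mathbf{1}_{a<x\leq z}$ to convert only $\mathbf{E}[\mathbf{1}_{X\leq z}Xf(X)]$ into $\mathbf{E}[\mathbf{1}_{X\leq z}f'(X)G]+f(z)\mathbf{E}[\mathbf{1}_{X\leq z}X]$, discards the resulting $\mathbf{E}[\mathbf{1}_{X\leq z}f'(X)(G-g_{\ast}(X))]\geq 0$, and then uses the centering $\mathbf{E}[X]=0$ together with the mean value theorem $f(X)-f(z)=f'(\xi)(X-z)$ on $\{X>z\}$ to land on $-\frac{1}{Q(z)}\mathbf{E}[\mathbf{1}_{X>z}X(X-z)]$. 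Your version replaces this by a global application of Lemma~\ref{lem:IntParts} with $m=f$ (yielding the clean identity $\Phi_{\ast}(z)-\mathbf{P}[X>z]=\mathbf{E}[f'(X)(g_{\ast}(X)-G)]$) followed by a \emph{second} application with $m(x)=(x-z)_{+}$ to evaluate $\mathbf{E}[G\,\mathbf{1}_{X>z}]$ directly, which is more streamlined and avoids the mean value step. The trade-off is that your global use of $m=f$ needs $f$ continuous and $f'$ bounded on the full support $(a,b_X)$ of $X$, which may extend beyond $b$; the paper's truncated choice only requires these properties on $(a,z]$, which is why it takes the slightly longer path here (contrast Theorem~\ref{thm:TailUBound}, where the support of $X$ is contained in $(a,b)$ and the paper \emph{does} take $m=f$). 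In practice $f'$ is bounded everywhere by the analysis preceding Lemma~\ref{lem:fprimebounded}, so your shortcut is legitimate.
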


\begin{proof}
Taking expectations in \textit{Stein's equation} (\ref{stein}), i.e. referring
to (\ref{stein_exp}), we have%
\[
\mathbf{P}[X\leq z]-\mathbf{P}[Z\leq z]=\mathbf{E}[g_{\ast}(X)f^{\prime
}(X)-Xf(X)]
\]
which is equivalent to
\[
\mathbf{P}[X>z]-\Phi_{\ast}(z)=\mathbf{E}[Xf(X)-g_{\ast}(X)f^{\prime}(X)].
\]

Since $g_{\ast}(X)\geq0$ almost surely and $f^{\prime}(x)\leq0$ if $x>z$,
\begin{align*}
\mathbf{P}[X>z]-\Phi_{\ast}(z)  &  =\mathbf{E}[\mathbf{1}_{X\leq
z}Xf(X)]+\mathbf{E}[\mathbf{1}_{X>z}Xf(X)]-\mathbf{E}[\mathbf{1}_{X\leq
z}g_{\ast}(X)f^{\prime}(X)]-\mathbf{E}[\mathbf{1}_{X>z}g_{\ast}(X)f^{\prime
}(X)]\\
&  \geq\mathbf{E}[\mathbf{1}_{X\leq z}Xf(X)]+\mathbf{E}[\mathbf{1}%
_{X>z}Xf(X)]-\mathbf{E}[\mathbf{1}_{X\leq z}g_{\ast}(X)f^{\prime}(X)]
\end{align*}

Let $m(x)=[f(a)-f(z)]\mathbf{1}_{x\leq a}+[f(x)-f(z)]\mathbf{1}_{a<x\leq z}$
where the first term is $0$ if $a=-\infty$. Note that $m$ is continuous and
piecewise differentiable. The derivative is $m^{\prime}(x)=f^{\prime
}(x)\mathbf{1}_{a<x\leq z}$ except at $x=a$ and $x=z$. We saw in Lemma
\ref{lem:fprimebounded} that $f^{\prime}$ is bounded. Therefore, since
$X\in\mathbf{D}^{1,2}$, we can use Lemma \ref{lem:IntParts} to conclude that
\[
\lbrack f(a)-f(z)]\mathbf{E}[\mathbf{1}_{X\leq a}X]+\mathbf{E}[\mathbf{1}%
_{a<X\leq z}X(f(X)-f(z))]=\mathbf{E}[\mathbf{1}_{a<X\leq z}f^{\prime}(X)G].
\]
from which we derive
\[
\mathbf{E}[\mathbf{1}_{X\leq z}Xf(X)]-f(z)\mathbf{E}[\mathbf{1}_{X\leq
z}X]=\mathbf{E}[\mathbf{1}_{X\leq z}f^{\prime}(X)G].
\]
Therefore,%
\begin{align*}
\mathbf{P}[X>z]-\Phi_{\ast}(z)  &  \geq\left\{  \mathbf{E}[\mathbf{1}_{X\leq
z}f^{\prime}(X)G]+f(z)\mathbf{E}[\mathbf{1}_{X\leq z}X]\right\}
+\mathbf{E}[\mathbf{1}_{X>z}Xf(X)]-\mathbf{E}[\mathbf{1}_{X\leq z}g_{\ast
}(X)f^{\prime}(X)]\\
&  =\mathbf{E}[\mathbf{1}_{X\leq z}f^{\prime}(X)(G-g_{\ast}%
(X))]+f(z)\mathbf{E}[\mathbf{1}_{X\leq z}X]+\mathbf{E}[\mathbf{1}%
_{X>z}Xf(X)]\\
&  \geq f(z)\mathbf{E}[\mathbf{1}_{X\leq z}X]+\mathbf{E}[\mathbf{1}%
_{X>z}Xf(X)]\\
&  =f(z)\mathbf{E}[\mathbf{1}_{X\leq z}X]+\mathbf{E}[\mathbf{1}_{X>z}%
Xf(X)]-f(z)\mathbf{E}[\mathbf{1}_{X>z}X]+f(z)\mathbf{E}[\mathbf{1}_{X>z}X]\\
&  =f(z)\mathbf{E}[X]+\mathbf{E}[\mathbf{1}_{X>z}X(f(X)-f(z))]\\
&  =\mathbf{E}[\mathbf{1}_{X>z}X(f(X)-f(z))]
\end{align*}

Write $f(X)-f(z)=f^{\prime}(\xi)(X-z)$ for some random $\xi>z$ ($X>\xi$ also).
Note that $f^{\prime}(\xi)<0$ since $\xi>z$. We have $\mathbf{P}%
[X>z]-\Phi_{\ast}(z)\geq\mathbf{E}[\mathbf{1}_{X>z}f^{\prime}(\xi)X(X-z)]$.
From Lemma \ref{lem:fprimebounded},
\[
f^{\prime}(\xi)\geq-\frac{1}{Q(z)}%
\]
since from Lemma \ref{lemQ}, $Q$ is non-decreasing on $\left(  0,b\right)  $.

If we define $S(z):=\mathbf{P}[X>z]$, it is elementary to show (see
\cite{Viens}) that
\[
\mathbf{E}[\mathbf{1}_{X>z}X(X-z)]\leq\int_{z}^{b}(2x-z)S(x)\,dx.
\]
From $\mathbf{P}[X>z]-\Phi_{\ast}(z)\geq\mathbf{E}[\mathbf{1}_{X>z}f^{\prime
}(\xi)X(X-z)]$,
\[
\mathbf{P}[X>z]\geq\Phi_{\ast}(z)-\frac{1}{Q(z)}\int_{z}^{b}(2x-z)S(x)\,dx
\]
which is the statement of the theorem.

Lastly the reader will check that the assumption that the supports of $Z$ and
$X$ have the same left-endpoint is not a restriction: stated briefly, this
assumption is implied by the assumption $G$ $\geq g_{\ast}\left(  X\right)  $
a.s., because $G=g_{X}\left(  X\right)  $ and $g_{\ast}$ (resp. $g_{X}$) has
the same support as $Z$ (resp. $X$).
\end{proof}

\bigskip

To obtain a similar upper bound result, we will consider only asymptotic
statements for $z$ near $b$, and will need an assumption about the relative
growth rate of $g_{\ast}$ and $Q$ near $b$. We will see in the next section
that this assumption is satisfied for all members of the Pearson class with
four moments, although that section also contains a modification of the proof
below which is more efficient when applied to the Pearson class.

\begin{theorem}
\label{thm:TailUBound} Assume all the conditions of Theorem
\ref{thm:TailLBound}\ hold, except for the support of $X$, which we now assume
is contained in $\left(  a,b\right)  $. Assume moreover that there exists
$c<1$ such that $\limsup_{z\rightarrow b}g_{\ast}\left(  z\right)  /Q\left(
z\right)  <c$. If $G\leq g_{\ast}\left(  X\right)  $ a.s., then there exists
$z_{0}$ such that $b>z>z_{0}$ implies%
\[
\mathbf{P}[X>z]\leq\frac{1}{1-c}\Phi_{\ast}(z).
\]

\end{theorem}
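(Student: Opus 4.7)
The plan is to mirror the proof of Theorem \ref{thm:TailLBound} with all inequalities reversed, exploit the new assumption $G\leq g_{\ast}(X)$ a.s.\ to drop a nonpositive term, and then close the estimate by absorbing a multiple of $\mathbf{P}[X>z]$ from the right-hand side into the left, which is exactly where the condition $\limsup_{z\to b}g_{\ast}(z)/Q(z)<c<1$ plays its role. I would begin, as in the previous theorem, by applying expectations in Stein's equation (\ref{stein}) to write
\[
\mathbf{P}[X>z]-\Phi_{\ast}(z)=\mathbf{E}[Xf(X)]-\mathbf{E}[g_{\ast}(X)f^{\prime}(X)],
\]
split each expectation according to $\{X\leq z\}$ and $\{X>z\}$, and invoke Lemma \ref{lem:IntParts} with the Lipschitz function $m(x)=[f(x)-f(z)]\mathbf{1}_{a<x\leq z}$ (extended to be $f(a)-f(z)$ on $(-\infty,a]$ when $a>-\infty$; its derivative is bounded by Lemma \ref{lem:fprimebounded}). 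This gives
\[
\mathbf{E}[\mathbf{1}_{X\leq z}Xf(X)]=\mathbf{E}[\mathbf{1}_{X\leq z}f^{\prime}(X)G]+f(z)\mathbf{E}[\mathbf{1}_{X\leq z}X].
\]

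Substituting and using $\mathbf{E}[X]=0$ (so that $f(z)\mathbf{E}[\mathbf{1}_{X\leq z}X]=-f(z)\mathbf{E}[\mathbf{1}_{X>z}X]$), I would collect the two rewritten $\{X\leq z\}$ contributions into the single term $\mathbf{E}[\mathbf{1}_{X\leq z}f^{\prime}(X)(G-g_{\ast}(X))]$, which is nonpositive because $f^{\prime}\geq 0$ on $(a,z]$ (Lemma \ref{lem:signfprime}) and $G-g_{\ast}(X)\leq 0$ by hypothesis. The $\{X>z\}$ contributions combine into $\mathbf{E}[\mathbf{1}_{X>z}X(f(X)-f(z))]-\mathbf{E}[\mathbf{1}_{X>z}g_{\ast}(X)f^{\prime}(X)]$; since $f^{\prime}\leq 0$ on $(z,b)$, we have $f(X)-f(z)\leq 0$ and $X>z>0$, so the first of these is also nonpositive. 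Dropping both nonpositive pieces leaves the clean inequality
\[
\mathbf{P}[X>z]-\Phi_{\ast}(z)\leq -\mathbf{E}[\mathbf{1}_{X>z}g_{\ast}(X)f^{\prime}(X)].
\]

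To finish, I would apply the sharp bound $-f^{\prime}(x)\leq 1/Q(x)$ for $x>z>0$, which is exactly what the proof of (\ref{f'bound2}) in Lemma \ref{lem:fprimebounded} yields before the monotonicity of $Q$ is invoked. This gives
\[
-\mathbf{E}[\mathbf{1}_{X>z}g_{\ast}(X)f^{\prime}(X)]\leq \mathbf{E}\!\left[\mathbf{1}_{X>z}\,\frac{g_{\ast}(X)}{Q(X)}\right].
\]
By the hypothesis $\limsup_{x\to b}g_{\ast}(x)/Q(x)<c$, one may choose $z_{0}\in(a,b)$ so that $g_{\ast}(x)/Q(x)<c$ for every $x\in(z_{0},b)$; then for any $z>z_{0}$ the right-hand side is bounded by $c\,\mathbf{P}[X>z]$, and rearranging produces the advertised estimate $\mathbf{P}[X>z]\leq (1-c)^{-1}\Phi_{\ast}(z)$.

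The only delicate step is the self-absorption at the end: pointwise use of the bound $-f^{\prime}\leq 1/Q$ lets us pay for the tail of $X$ with a fraction $c<1$ of itself, but this fails without the hypothesis on $g_{\ast}/Q$ near $b$, so that is where I expect the main subtlety to lie; all other steps are sign bookkeeping and a direct reuse of the integration-by-parts trick from Theorem \ref{thm:TailLBound}.
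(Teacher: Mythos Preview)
Your argument is correct and reaches the key inequality
\[
\mathbf{P}[X>z]-\Phi_{\ast}(z)\leq -\mathbf{E}[\mathbf{1}_{X>z}g_{\ast}(X)f^{\prime}(X)]
\]
(which is exactly (\ref{starthere}) in the paper), after which you finish identically to the paper by invoking the pointwise bound $-f^{\prime}(x)\leq 1/Q(x)$ from the proof of Lemma~\ref{lem:fprimebounded} and the hypothesis on $g_{\ast}/Q$.

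The route to (\ref{starthere}) is slightly different, however. The paper does \emph{not} reuse the truncated test function from Theorem~\ref{thm:TailLBound}; instead it applies Lemma~\ref{lem:IntParts} directly with $m=f$ (legitimate since $f^{\prime}$ is globally bounded by Lemma~\ref{lem:fprimebounded}), obtaining the clean identity $\mathbf{P}[X>z]-\Phi_{\ast}(z)=\mathbf{E}[f^{\prime}(X)(G-g_{\ast}(X))]$. After splitting on $\{X\leq z\}$ and $\{X>z\}$ and dropping the first piece by sign, the paper is left with $\mathbf{E}[\mathbf{1}_{X>z}f^{\prime}(X)G]-\mathbf{E}[\mathbf{1}_{X>z}f^{\prime}(X)g_{\ast}(X)]$ and needs the additional fact $\mathbf{E}[G\mid X]\geq 0$ a.s.\ (Proposition~3.9 in \cite{NP}) to discard the $G$-term. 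Your decomposition instead leaves $\mathbf{E}[\mathbf{1}_{X>z}X(f(X)-f(z))]$ as the extra term, which is nonpositive purely from $X>z>0$ and $f^{\prime}\leq 0$ on $(z,b)$. So your version is a shade more self-contained (it avoids the external input $\mathbf{E}[G\mid X]\geq 0$), while the paper's version is a line shorter because it skips the truncation-and-recentering bookkeeping.
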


\begin{proof}
From Stein's equation (\ref{stein}), and its application (\ref{stein_exp}),%
\[
\mathbf{P}[X>z]-\Phi_{\ast}(z)=\mathbf{E}[Xf(X)-g_{\ast}(X)f^{\prime}(X)].
\]

Since $X\in\mathbf{D}^{1,2}$, in Lemma \ref{lem:IntParts}, we can let $m=f$
since $f$ is continuous, differentiable everywhere except at $x=a$ and $x=b$,
and from Lemma \ref{lem:fprimebounded} has a bounded derivative. Therefore,%
\begin{align*}
\mathbf{P}[X>z]-\Phi_{\ast}(z)  &  =\mathbf{E}[Gf^{\prime}(X)]-\mathbf{E}%
[g_{\ast}(X)f^{\prime}(X)]\\
&  =\mathbf{E}[f^{\prime}(X)\left(  G-g_{\ast}(X)\right)  ]\\
&  =\mathbf{E}[\mathbf{1}_{X\leq z}f^{\prime}(X)\left(  G-g_{\ast}(X)\right)
]+\mathbf{E}[\mathbf{1}_{X>z}f^{\prime}(X)\left(  G-g_{\ast}(X)\right)  ]\\
&  \leq\mathbf{E}[\mathbf{1}_{X>z}f^{\prime}(X)\left(  G-g_{\ast}(X)\right)
]\\
&  =\mathbf{E}[\mathbf{1}_{X>z}f^{\prime}(X)\mathbf{E}\left[  G|X\right]
]-\mathbf{E}[\mathbf{1}_{X>z}f^{\prime}(X)g_{\ast}(X)]
\end{align*}
where the last inequality follows from the assumption $G-g_{\ast}(X)\leq0$
a.s. and if $X\leq z$, then $f^{\prime}(X)\geq0$. By Proposition 3.9 in
\cite{NP}, $\mathbf{E}\left[  G|X\right]  \geq0$ a.s. Since $f^{\prime}%
(X)\leq0$ if $X>z$, then by the last statement in Lemma
\ref{lem:fprimebounded}, and the assumption on the asymptotic behavior of
$g_{\ast}/Q$, for $z$ large enough,%
\begin{align}
\mathbf{P}[X>z]-\Phi_{\ast}(z)  &  \leq-\mathbf{E}[\mathbf{1}_{X>z}f^{\prime
}(X)g_{\ast}(X)]\label{starthere}\\
&  \leq\mathbf{E}\left[  \mathbf{1}_{X>z}\frac{g_{\ast}(X)}{Q\left(  X\right)
}\right] \nonumber\\
&  \leq c\mathbf{P}[X>z].\nonumber
\end{align}
The theorem immediately follows.
\end{proof}

%We finish this section with some remarks on the relation between $g_{\ast}$
%and $G$. It was noticed in \cite{NV} that if $Z\in\mathbf{D}^{1,2}$ and it has
%a density (which is then necessarily supported by an interval), then $g_{\ast
%}\left(  z\right)  =\mathbf{E}\left[  \langle DZ;-DL^{-1}Z\rangle
%_{H}\;|\;Z=z\right]  $. This immediately begs the definition of the $g_{\ast}$
%corresponding to $X$, i.e.
%\begin{equation}
%g\left(  x\right)  :=\mathbf{E}\left[  \langle DX;-DL^{-1}X\rangle
%_{H}\;|\;X=x\right]  =\mathbf{E}\left[  G\;|\;X=x\right]  , \label{gee}%
%\end{equation}
%which in turn implies that the quantitative assumption in the above theorem is
%no more than the comparison $g\leq g_{\ast}$. The above theorem can be proved
%using a different technique, based on the density formula $\rho\left(
%x\right)  =\mathbf{E}\left[  |X|\right]  /\left(  2g\left(  x\right)  \right)
%\exp\left(  -\int_{0}^{x}y\,dy/g\left(  y\right)  \right)  $ which was proved
%in \cite{NV}, by applying it to both $Z$ and $X$. We will not comment on this
%point further, with one exception in the proof of an important corollary in
%the next section.

\section{Pearson Distributions}

By definition, the law of a random variable $Z$ is a member of the Pearson
family of distributions if $Z$'s density $\rho_{\ast}$ is characterized by the
differential equation $\rho_{\ast}^{\prime}(z)/\rho_{\ast}(z)=(a_{1}%
z+a_{0})/(\alpha z^{2}+\beta z+\gamma)$ for $z$ in its support $(a,b)$, where
$-\infty\leq a<b\leq\infty$. If furthermore $\mathbf{E}[Z]=0$, Stein (Theorem
1, p. 65 in \cite{Stein}) proved that $g_{\ast}$ has a simple form: in fact,
it is quadratic in its support. Specifically, $g_{\ast}(z)=\alpha z^{2}+\beta
z+\gamma$ for all $z\in\left(  a,b\right)  $ if and only if
\begin{equation}
\frac{\rho_{\ast}^{\prime}(z)}{\rho_{\ast}(z)}=-\frac{(2\alpha+1)z+\beta
}{\alpha z^{2}+\beta z+\gamma}. \label{pearson2}%
\end{equation}

The Appendix contains a description of various cases of Pearson distributions,
which are characterized by their first four moments, if they exist. In this
section, we will operate under the following.

\begin{description}
\item[Assumption P1] Our Pearson random variable satisfies $\mathbf{E}\left[
Z^{2}\right]  <\infty$ and $z^{3}\rho_{\ast}(z)\rightarrow0$ as $z\rightarrow
a$ and $z\rightarrow b$.
\end{description}

\begin{remark}
\label{rem:ass1}This assumption holds as soon as $\mathbf{E}\left[
Z^{3}\right]  <\infty$, which, by Lemma \ref{lem:PearsonMoment} in the
Appendix, holds if and only if $\alpha<1/2$. The existence of a second moment,
by the same lemma, holds if and only if $\alpha<1$.
\end{remark}

\subsection{General comparisons to Pearson tails}

In preparation to stating corollaries to our main theorems, applicable to all
Pearson $Z$'s simultaneously, we begin by investigating the specific
properties of $g_{\ast}$ and $Q$ in the Pearson case. Because $g_{\ast
}(z)=\left(  \alpha z^{2}+\beta z+\gamma\right)  \mathbf{1}_{(a,b)}(z)$, we
have the following observations:

\begin{itemize}
\item Since $g_{\ast}^{\prime\prime}(z)=2\alpha$ on $(a,b)$, and $\alpha<1$
according to Remark \ref{rem:ass1}, it follows that $g_{\ast}^{\prime\prime
}(z)<2$.

\item If $z\in(a,b)$, then
\[
Q(z)=z^{2}-zg_{\ast}^{\prime}(z)+g_{\ast}(z)=z^{2}-z\left(  2\alpha
z+\beta\right)  +\left(  \alpha z^{2}+\beta z+\gamma\right)  =\left(
1-\alpha\right)  z^{2}+\gamma
\]
and so $Q(z)\geq Q(0)=\gamma=g_{\ast}\left(  0\right)  >0$, where the last
inequality is because $g_{\ast}$ is strictly positive on the interior of its
support, which always contains $0$. This is a quantitative confirmation of an
observation made earlier about the positivity of $Q$ in the general case.

\item As $z\rightarrow a$ and $z\rightarrow b$,
\[
\frac{z-g_{\ast}^{\prime}(z)}{Q(z)}=\frac{\left(  1-2\alpha\right)  z-\beta
}{\left(  1-\alpha\right)  z^{2}+\gamma}%
\]
approaches a finite number in case $a$ and $b$ are finite. As $|z|\rightarrow
\infty$, the above ratio approaches $0$.

\item We have $\mathbf{E}\left[  Z^{2}\right]  =\frac{\gamma}{1-\alpha}$.
Again, this is consistent with $\gamma>0$ and $\alpha<1$.\label{bullets}
\end{itemize}

\begin{remark}
The above observations collectively mean that all the assumptions of Theorem
\ref{thm:TailLBound} are satisfied for our Pearson random variable $Z$, so we
can state the following.
\end{remark}

\begin{proposition}
\label{thm:TailLBoundP} Let $Z$ be a centered Pearson random variable
satisfying Assumption P1. Let $g_{\ast}$ be defined as in (\ref{geestar}). Let
$X\in\mathbf{D}^{1,2}$ be a centered random variable with a density, and whose
support $\left(  a,b_{X}\right)  $ contains $(a,b)$. Suppose that $G\geq
g_{\ast}\left(  X\right)  $ a.s. Then for every $z\in(0,b)$,
\begin{equation}
\mathbf{P}[X>z]\geq\Phi_{\ast}(z)-\frac{1}{\left(  1-\alpha\right)
z^{2}+\gamma}\int_{z}^{b}(2x-z)\mathbf{P}[X>x]\,dx. \label{result1P}%
\end{equation}

\end{proposition}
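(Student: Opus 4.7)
The plan is to simply recognize that Proposition \ref{thm:TailLBoundP} is a direct specialization of Theorem \ref{thm:TailLBound} to the Pearson case, so the entire task reduces to verifying that the hypotheses of the general theorem are satisfied for a centered Pearson $Z$ under Assumption P1, and then identifying the function $Q$ explicitly.

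First, I would recall that a Pearson density $\rho_{\ast}$ satisfies $\rho_{\ast}'(z)/\rho_{\ast}(z) = -((2\alpha+1)z+\beta)/(\alpha z^2+\beta z+\gamma)$ on $(a,b)$. Since the denominator $g_{\ast}(z) = \alpha z^2+\beta z+\gamma$ is strictly positive on the interior of the support by Lemma \ref{lemgstar}(1), this logarithmic derivative is a rational function of $z$ that is smooth on $(a,b)$. Consequently $\rho_{\ast}$ is $C^{\infty}$, hence certainly twice differentiable on $(a,b)$, which gives the first hypothesis of Theorem \ref{thm:TailLBound}.

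Next I would invoke the four displayed bullet points immediately preceding the proposition, which together verify the remaining analytic hypotheses: from Remark \ref{rem:ass1} and Assumption P1 we have $\alpha<1$, so $g_{\ast}''(z)=2\alpha<2$ on $(a,b)$; the computation
\[
Q(z) = z^2 - z g_{\ast}'(z) + g_{\ast}(z) = z^2 - z(2\alpha z+\beta) + (\alpha z^2+\beta z+\gamma) = (1-\alpha)z^2 + \gamma
\]
shows $Q$ has the stated explicit form and is bounded below by $\gamma>0$; and the ratio $(z-g_{\ast}'(z))/Q(z) = ((1-2\alpha)z-\beta)/((1-\alpha)z^2+\gamma)$ tends to a finite limit as $z\to a$ and $z\to b$, whether these endpoints are finite or infinite. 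The assumption that $X$ is centered, Malliavin-differentiable in $\mathbf{D}^{1,2}$, has a density, and has support containing $(a,b)$ is exactly as required, and $G\geq g_{\ast}(X)$ a.s.\ is the hypothesis carried over verbatim.

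Having checked every hypothesis, I would then apply Theorem \ref{thm:TailLBound} to conclude that for every $z\in(0,b)$,
\[
\mathbf{P}[X>z]\geq \Phi_{\ast}(z) - \frac{1}{Q(z)}\int_{z}^{b}(2x-z)\mathbf{P}[X>x]\,dx,
\]
and finally substitute $Q(z)=(1-\alpha)z^2+\gamma$ to obtain exactly \eqref{result1P}. There is essentially no obstacle here; the only subtle point is making sure the twice-differentiability of $\rho_{\ast}$ is noted explicitly (it is not stated in Assumption P1 but follows for free from the Pearson ODE), and that the boundary condition $z^3\rho_{\ast}(z)\to 0$ of Assumption P1 is not needed for the lower bound statement itself but is the natural integrability hypothesis ensuring the associated moment quantities used implicitly via Lemma \ref{lemgstar} behave correctly.
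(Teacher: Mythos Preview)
Your proposal is correct and matches the paper's own approach exactly: the paper presents this proposition as an immediate consequence of Theorem \ref{thm:TailLBound}, with the four bullet points preceding it serving precisely to verify the theorem's hypotheses in the Pearson case and to compute $Q(z)=(1-\alpha)z^{2}+\gamma$, after which the remark before the proposition simply states that ``all the assumptions of Theorem \ref{thm:TailLBound} are satisfied.'' Your only addition is making the twice-differentiability of $\rho_{\ast}$ explicit via the Pearson ODE, which the paper leaves implicit.
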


We have a quantitatively precise statement on the relation between
$\operatorname*{Var}[X]$ and the Pearson parameters.

\begin{proposition}
~

\begin{enumerate}
\item Assume that the conditions of Proposition \ref{thm:TailLBoundP} hold,
particularly that $G\geq g_{\ast}(X)$; assume the support $\left(  a,b\right)
$ of $g_{\ast}$ coincides with the support of $X$. Then%
\[
\operatorname*{Var}[X]\geq\frac{\gamma}{1-\alpha}=\operatorname*{Var}[Z].
\]

\item If we assume instead that $G\leq g_{\ast}(X)$ a.s., then the inequality
above is reversed.
\end{enumerate}
\end{proposition}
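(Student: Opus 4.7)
The proof should be very short: it is a direct computation exploiting the quadratic form of $g_{\ast}$ for Pearson laws together with the Malliavin integration by parts formula applied to the simplest possible test function. The plan is to apply Lemma \ref{lem:IntParts} with $m(x)=x$. Since $m'(x)=1$ is bounded and $X\in\mathbf{D}^{1,2}$, the lemma immediately yields
\[
\mathbf{E}[X^{2}] \;=\; \mathbf{E}[X\,m(X)] \;=\; \mathbf{E}[m'(X)\,G] \;=\; \mathbf{E}[G].
\]

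Next I use the fact that $X$ is almost surely supported in $(a,b)$, so that $g_{\ast}(X)=\alpha X^{2}+\beta X+\gamma$ a.s. (recall that outside $(a,b)$ the function $g_{\ast}$ is defined to be zero, but here the supports coincide). The hypothesis $G\geq g_{\ast}(X)$ a.s.\ then gives, after taking expectations and using that $X$ is centered,
\[
\mathbf{E}[G] \;\geq\; \mathbf{E}[g_{\ast}(X)] \;=\; \alpha\,\mathbf{E}[X^{2}]+\beta\,\mathbf{E}[X]+\gamma \;=\; \alpha\,\mathbf{E}[X^{2}]+\gamma.
\]
Combining the two displays yields $(1-\alpha)\,\mathbf{E}[X^{2}]\geq\gamma$. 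Since $\alpha<1$ under Assumption P1 (see Remark \ref{rem:ass1}), we can divide to obtain
\[
\operatorname*{Var}[X]\;=\;\mathbf{E}[X^{2}]\;\geq\;\frac{\gamma}{1-\alpha}\;=\;\operatorname*{Var}[Z],
\]
where the last equality was recorded in the bullet-point discussion of Pearson parameters preceding Proposition \ref{thm:TailLBoundP}. The second claim is proved in exactly the same way by reversing all inequalities starting from $G\leq g_{\ast}(X)$.

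There is essentially no obstacle here. The only points worth checking are (i) that Lemma \ref{lem:IntParts} applies with $m(x)=x$, which is immediate since $m'\equiv 1$ is bounded and $X\in\mathbf{D}^{1,2}$, and (ii) that $g_{\ast}(X)$ really equals the quadratic $\alpha X^{2}+\beta X+\gamma$ rather than vanishing on some positive-probability set, which is guaranteed by the explicit hypothesis that the supports of $X$ and $g_{\ast}$ coincide. The only subtle asymmetry between the two cases is that, for the upper-bound version, one must also know $\mathbf{E}[g_{\ast}(X)]<\infty$ in order to manipulate the reversed inequality meaningfully; but this is automatic from $\mathbf{E}[X^{2}]<\infty$ (which holds by $X\in\mathbf{D}^{1,2}$) together with the quadratic form of $g_{\ast}$.
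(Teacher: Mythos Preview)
Your proof is correct and follows essentially the same approach as the paper: apply Lemma \ref{lem:IntParts} with $m(x)=x$ to get $\operatorname*{Var}[X]=\mathbf{E}[G]$, then use $G\geq g_{\ast}(X)$ together with the quadratic form of $g_{\ast}$ on the common support and $\mathbf{E}[X]=0$ to obtain $(1-\alpha)\operatorname*{Var}[X]\geq\gamma$. The paper's argument is line-for-line the same, and your additional remarks verifying the applicability of the lemma and the finiteness of $\mathbf{E}[g_{\ast}(X)]$ are welcome clarifications.
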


\begin{proof}
Since $X$ has a density, we can apply Lemma \ref{lem:IntParts} and let
$m(x)=x$.%
\begin{align*}
\operatorname*{Var}[X]  &  =\mathbf{E}[Xm(X)]=\mathbf{E}[G]\geq\mathbf{E}%
[g_{\ast}(X)]\\
&  \geq\mathbf{E}[g_{\ast}(X)]=\mathbf{E}[\mathbf{1}_{a<X<b}\left(  \alpha
X^{2}+\beta X+\gamma\right)  ]=\alpha\mathbf{E}[X^{2}]+\beta\mathbf{E}%
[X]+\gamma\\
\left(  1-\alpha\right)  \operatorname*{Var}[X]  &  \geq\beta\cdot0+\gamma\\
\operatorname*{Var}[X]  &  \geq\frac{\gamma}{1-\alpha}=\operatorname*{Var}[Z].
\end{align*}
This proves point 1. Point 2 is done identically.
\end{proof}

\bigskip

In order to formulate results that are specifically tailored to tail
estimates, we now make the following.

\begin{description}
\item[Assumption P2] The right-hand endpoint of our Pearson distribution's
support is $b=+\infty$
\end{description}

\begin{remark}
Assumption P2 leaves out Case 3 in the Appendix in our list of Pearson random
variables, i.e. the case of Beta distributions. Therefore, inspecting the
parameter values in the other 4 Pearson cases, we see that Assumption P2
implies $\alpha\geq0$, and also implies that if $\alpha=0$, then $\beta\geq0$.
\end{remark}

\begin{remark}
In most of the results to follow, we will assume moreover that $\alpha
<\frac{1}{2}$. By Lemma \ref{lem:PearsonMoment}, this is equivalent to
requiring $\mathbf{E}\left[  \left\vert Z\right\vert ^{3}\right]  <\infty$,
and more generally from the lemma, our Pearson distribution has moment of
order $m$ if and only if $\alpha<1/(m-1)$. As mentioned, $\alpha<\frac{1}{2}$
thus implies Assumption P1. Consequently Theorem \ref{thm:TailUBound} implies
the following.
\end{remark}

\begin{corollary}
\label{thm:TailUBoundP2} Let $Z$ be a centered Pearson random variable
satisfying Assumption P2 (support of $Z$ is $(a,+\infty)$).\ Assume
$\alpha<1/2$. Let $g_{\ast}$ be defined as in (\ref{geestar}). Let
$X\in\mathbf{D}^{1,2}$ be a centered random variable with a density and
support contained in $(a,+\infty)$. If $G$ $\leq g_{\ast}\left(  X\right)  $
a.s., for any $K>\frac{1-\alpha}{1-2\alpha}$, there exists $z_{0}$ such that
if $z>z_{0}$, then
\[
\mathbf{P}[X>z]\leq K~\Phi_{\ast}(z).
\]

\end{corollary}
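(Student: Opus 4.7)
The plan is to deduce this corollary directly from Theorem \ref{thm:TailUBound}, after checking that every hypothesis of that theorem is met in the Pearson setting with $\alpha<1/2$, and then explicitly evaluating the asymptotic ratio $g_{\ast}/Q$ near $b=+\infty$ to produce the precise constant.

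First I would verify the structural hypotheses on $Z$, $g_{\ast}$, and $Q$ required by Theorem \ref{thm:TailUBound}. The bullet points that precede Proposition \ref{thm:TailLBoundP} already record that, in the Pearson case, $g_{\ast}''(z)=2\alpha$ (so the condition $g_{\ast}''<2$ holds whenever $\alpha<1$), that $(z-g_{\ast}'(z))/Q(z)$ has a finite limit at each endpoint of the support, and that $Q(z)=(1-\alpha)z^{2}+\gamma$ is strictly positive. The assumption $\alpha<1/2$ is stronger than $\alpha<1$, and via Remark \ref{rem:ass1} it also implies $\mathbf{E}[|Z|^{3}]<\infty$, hence Assumption P1, hence the twice-differentiability and tail behavior used implicitly throughout. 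The hypotheses on $X$ in Theorem \ref{thm:TailUBound} (Malliavin-differentiable, density, support contained in $(a,b)=(a,+\infty)$, $G\leq g_{\ast}(X)$ a.s.) are exactly what the corollary assumes.

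Next, I would explicitly compute
\[
\frac{g_{\ast}(z)}{Q(z)}=\frac{\alpha z^{2}+\beta z+\gamma}{(1-\alpha)z^{2}+\gamma}\;\xrightarrow[z\to\infty]{}\;\frac{\alpha}{1-\alpha},
\]
which is strictly less than $1$ precisely because $\alpha<1/2$. This is the asymptotic estimate demanded by Theorem \ref{thm:TailUBound}. Given $K>(1-\alpha)/(1-2\alpha)$, I would set $c:=1-1/K$; then $c\in(0,1)$ and a short algebraic rearrangement shows that $K>(1-\alpha)/(1-2\alpha)$ is equivalent to $c>\alpha/(1-\alpha)$. Consequently $\limsup_{z\to\infty}g_{\ast}(z)/Q(z)=\alpha/(1-\alpha)<c<1$, so Theorem \ref{thm:TailUBound} applies and yields a $z_{0}$ such that for every $z>z_{0}$,
\[
\mathbf{P}[X>z]\leq\frac{1}{1-c}\Phi_{\ast}(z)=K\Phi_{\ast}(z),
\]
which is exactly the claim.

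There is no real obstacle here beyond bookkeeping: the corollary is fundamentally a packaging of the limit $g_{\ast}/Q\to\alpha/(1-\alpha)$ into the quantitative statement of Theorem \ref{thm:TailUBound}. The only delicate point is the translation $K\leftrightarrow c$, which must be done so that one can attain any $K$ strictly larger than the critical value $(1-\alpha)/(1-2\alpha)$; the choice $c=1-1/K$ makes this transparent. If a sharper or more direct argument is wanted (as alluded to in the remark preceding the corollary), one could alternatively revisit the inequality chain starting at (\ref{starthere}) and bound $g_{\ast}(X)/Q(X)$ on $\{X>z\}$ by its supremum on $(z,\infty)$, obtaining the same conclusion without invoking Theorem \ref{thm:TailUBound} as a black box.
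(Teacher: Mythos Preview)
Your proposal is correct and follows essentially the same approach as the paper: compute $\lim_{z\to\infty} g_{\ast}(z)/Q(z)=\alpha/(1-\alpha)$, observe this is strictly less than $1$ precisely when $\alpha<1/2$, and then apply Theorem~\ref{thm:TailUBound} with the translation $K=1/(1-c)$. The paper's proof is slightly terser about verifying the structural hypotheses, but the logic is identical.
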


\begin{proof}
Since
\[
\frac{g_{\ast}\left(  z\right)  }{Q\left(  z\right)  }=\frac{\alpha
z^{2}+\beta z+\gamma}{\left(  1-\alpha\right)  z^{2}+\gamma}%
\]
then $\limsup_{z\rightarrow\infty}g_{\ast}\left(  z\right)  /Q\left(
z\right)  =\alpha/\left(  1-\alpha\right)  <1$ if and only if $\alpha<\frac
{1}{2}$. Therefore, Theorem \ref{thm:TailUBound} applies in this case, and
with the $c$ defined in that theorem, we may take here any $c>\alpha/\left(
1-\alpha\right)  $, so that we may take any $K=1/(1-c)$ as announced.
\end{proof}

\bigskip

The drawback of our general lower bound theorems so far is that their
statements are somewhat implicit. Our next effort is to fix this problem in
the specific case of a Pearson $Z$: we strengthen Proposition
\ref{thm:TailLBoundP} so that the tail $\mathbf{P}\left[  X>z\right]  $ only
appears in the left-hand side of the lower bound inequality, making the bound
explicit. The cost for this is an additional regularity and integrability
assumption, whose scope we also discuss.

\begin{corollary}
\label{cor:TailLBoundP} Assume that the conditions of Proposition
\ref{thm:TailLBoundP} hold; in particular, assume $X\in\mathbf{D}^{1,2}$ and
$G\geq\alpha X^{2}+\beta X+\gamma$ a.s. In addition, assume there exists a
constant $c>2$ such that $\mathbf{P}\left[  X>z\right]  \leq z\rho\left(
z\right)  /c$ holds for large $z$ (where $\rho$ is the density of $X$). Then
for large $z$,
\[
\mathbf{P}[X>z]\geq\frac{\left(  c-2\right)  Q(z)}{\left(  c-2\right)
Q(z)+2z^{2}}\Phi_{\ast}(z)\approx\frac{\left(  c-2\right)  -\alpha\left(
c-2\right)  }{c-\alpha\left(  c-2\right)  }\Phi_{\ast}(z).
\]

The existence of such a $c>2$ above is guaranteed if we assume $g(z)\leq
z^{2}/c$ for large $z$, where $g\left(  x\right)  :=\mathbf{E}\left[
G|X=x\right]  $ (or equivalently, $g$ defined in (\ref{geeintro})). Moreover,
this holds automatically if $G\leq\bar{g}_{\ast}\left(  X\right)  $ a.s. for
some quadratic function $\bar{g}_{\ast}\left(  x\right)  =\bar{\alpha}%
x^{2}+\bar{\beta}x+\bar{\gamma}$ with $\bar{\alpha}<1/2$.
\end{corollary}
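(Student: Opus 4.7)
The plan is to start from the implicit lower bound in Proposition \ref{thm:TailLBoundP}, namely
\[
S(z):=\mathbf{P}[X>z]\geq\Phi_\ast(z)-\frac{1}{Q(z)}\int_z^\infty(2x-z)S(x)\,dx
\]
(where $Q(z)=(1-\alpha)z^2+\gamma$ in the Pearson case), and convert it into an explicit one by bounding the remaining tail integral by a constant multiple of $z^2 S(z)$. The extra hypothesis $S(z)\leq z\rho(z)/c$ for large $z$ is exactly what allows this conversion; the rest is then a rearrangement.

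The key analytic observation is that, since $\rho=-S'$, the hypothesis is the differential inequality $cS(z)\leq -zS'(z)$, which is the same as $(y^c S(y))'\leq 0$ wherever the hypothesis is in force. Hence $y\mapsto y^c S(y)$ is non-increasing for $y$ large enough, so that $S(y)\leq (z/y)^c S(z)$ for all $y\geq z$, provided $z$ is sufficiently large. Using $2x-z\leq 2x$ for $x\geq z>0$ together with this power decay, one computes
\[
\int_z^\infty(2x-z)S(x)\,dx\;\leq\;2\int_z^\infty xS(x)\,dx\;\leq\;2S(z)z^c\int_z^\infty x^{1-c}\,dx\;=\;\frac{2z^2}{c-2}S(z),
\]
where the last step uses (and needs) $c>2$. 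Substituting this into the displayed inequality above and solving for $S(z)$ yields the announced ratio, and the approximate form follows by keeping leading-order terms: since the numerator and denominator of $(c-2)Q(z)/[(c-2)Q(z)+2z^2]$ behave like $(c-2)(1-\alpha)z^2$ and $[(c-2)(1-\alpha)+2]z^2 = [c-\alpha(c-2)]z^2$ respectively, the ratio tends to $[(c-2)-\alpha(c-2)]/[c-\alpha(c-2)]$ as $z\to\infty$.

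For the two remaining sentences of the corollary, everything reduces to elementary manipulations. If $g(z)\leq z^2/c$, then the identity $\rho(z)g(z)=\int_z^\infty y\rho(y)\,dy\geq zS(z)$ from (\ref{geeintro}) immediately gives $S(z)\leq\rho(z)g(z)/z\leq z\rho(z)/c$, verifying the hypothesis. Finally, if $G\leq\bar g_\ast(X)$ a.s.\ with $\bar g_\ast(x)=\bar\alpha x^2+\bar\beta x+\bar\gamma$ and $\bar\alpha<1/2$, then the representation $g(x)=\mathbf{E}[G\mid X=x]$ yields $g(z)\leq\bar g_\ast(z)$, and since $\bar g_\ast(z)/z^2\to\bar\alpha<1/2$ one may choose any $c\in(2,1/\bar\alpha)$ and verify that $g(z)\leq z^2/c$ for all sufficiently large $z$.

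The only part requiring genuine insight is the recognition that the pointwise hypothesis $S(z)\leq z\rho(z)/c$ is really the statement $(y^c S(y))'\leq 0$, which delivers the power-type majorant $S(y)\leq (z/y)^c S(z)$. Once that observation is in hand, the rest is routine bookkeeping, with the function $Q$ from (\ref{Q}) playing its usual role of keeping the final algebra transparent.
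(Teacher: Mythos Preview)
Your proof is correct and follows the same overall strategy as the paper: start from Proposition~\ref{thm:TailLBoundP}, bound $\int_z^\infty(2x-z)S(x)\,dx$ by $\tfrac{2z^2}{c-2}S(z)$, and rearrange. The only difference is in how that integral bound is obtained. The paper uses the hypothesis pointwise under the integral, writing $xS(x)\leq \tfrac{1}{c}x^2|S'(x)|$, and then integrates by parts to get $F(z)\leq\tfrac{1}{c}\bigl(z^2S(z)+2F(z)\bigr)$, which rearranges to $F(z)\leq\tfrac{z^2}{c-2}S(z)$. You instead integrate the differential inequality first, observing that $cS(y)\leq -yS'(y)$ is equivalent to $(y^cS(y))'\leq0$, whence $S(y)\leq(z/y)^cS(z)$ for $y\geq z$, and then compute the resulting integral directly. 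Both routes yield the identical constant; yours has the mild advantage of making the power decay $S(y)=O(y^{-c})$ explicit (which incidentally justifies the vanishing boundary term the paper silently drops), while the paper's integration-by-parts argument is a touch shorter. Your treatment of the last two sentences of the corollary matches the paper's.
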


\begin{proof}
Since $z>0$, we can replace $2x-z$ by $2x$ in the integral of (\ref{result1P}%
).%
\begin{align*}
F(z):=  &  \int_{z}^{\infty}xS(x)\,dx\leq\frac{1}{c}\int_{z}^{\infty}%
x^{2}|S^{\prime}(x)|\,dx\\
&  =\frac{1}{c}\left(  z^{2}S(z)-\lim_{x\rightarrow\infty}x^{2}S(x)+2\int
_{z}^{\infty}xS(x)\,dx\right) \\
&  \leq\frac{1}{c}\left(  z^{2}S(z)+2F(z)\right) \\
F(z)  &  \leq\frac{1}{c-2}z^{2}S(z)
\end{align*}
Therefore%
\begin{align*}
S(z)=\mathbf{P}[X>z]  &  \geq\Phi_{\ast}(z)-\frac{2}{Q(z)}F(z)\geq\Phi_{\ast
}(z)-\frac{2z^{2}}{\left(  c-2\right)  Q(z)}S(z)\\
S(z)\left[  1+\frac{2z^{2}}{\left(  c-2\right)  Q(z)}\right]   &  \geq
\Phi_{\ast}(z)\\
S(z)  &  \geq\frac{\left(  c-2\right)  Q(z)}{\left(  c-2\right)  Q(z)+2z^{2}%
}\Phi_{\ast}(z)\approx\frac{\left(  c-2\right)  (1-\alpha)}{\left(
c-2\right)  (1-\alpha)+2}\Phi_{\ast}(z).
\end{align*}
This proves the inequality of the Corollary.

To prove the second statement, recall that \cite[Theorem 3.1]{NV} showed that%
\[
g(X)=\frac{\displaystyle\int_{X}^{\infty}x\rho(x)\,dx}{\rho(X)}%
\]
$\mathbf{P}$-a.s. It is also noted therein that the support of $\rho$ is an
interval since $X\in\mathbf{D}^{1,2}$. Therefore,
\[
\frac{z}{c}\rho(z)\geq\frac{1}{z}g(z)\rho(z)=\int_{z}^{\infty}\frac{x}{z}%
\rho(x)\,dx\geq\int_{z}^{\infty}\rho(x)\,dx
\]
a.s. This finishes the proof of the corollary.
\end{proof}

\subsection{Comparisons in specific scales}

In this section and the next, we will always assume $X\in\mathbf{D}^{1,2}$ is
a centered random variable with a density and with support $\left(
a,\infty\right)  $, and we will continue to denote by $g$ the function defined
by $g\left(  x\right)  :=\mathbf{E}\left[  G|X=x\right]  $, or equivalently,
defined in (\ref{geeintro}).

We can exploit the specific asymptotic behavior of the tail of the various
Pearson distributions, via Lemma \ref{lem:phitail} in the Appendix, to draw
sharp conclusions about $X$'s tail. For instance, if $g$ is comparable to a
Pearson distribution's $g_{\ast}$ with $\alpha\neq0$, we get a power decay for
the tail (Corollary \ref{cor:TailX2} below), while if $\alpha$ is zero and
$\beta$ is not, we get comparisons to exponential-type or gamma-type tails
(Corollary \ref{cor:TailX1} below). In both cases, when upper and lower bounds
on $G$ occur with the same $\alpha$ on both sides, we get sharp asymptotics
for $X$'s tail, up to multiplicative constants.

\begin{corollary}
\label{cor:TailX2}Let $g_{\ast}\left(  x\right)  :=\alpha x^{2}+\beta
x+\gamma$ and $\bar{g}_{\ast}\left(  x\right)  :=\bar{\alpha}x^{2}+\bar{\beta
}x+\bar{\gamma}$ be two functions corresponding to Pearson distributions (e.g.
via (\ref{geestar})) where $0<\alpha\leq\bar{\alpha}<1/2$.

\begin{enumerate}
\item If $g\left(  x\right)  \leq\bar{g}_{\ast}\left(  x\right)  $ for all
$x\geq a$, then there is a constant $c_{u}\left(  \bar{\alpha},\bar{\beta
},\bar{\gamma}\right)  >0$ such that for large $z$,
\[
\mathbf{P}[X>z]\leq\frac{c_{u}}{z^{1+1/\bar{\alpha}}}.
\]

\item If $g_{\ast}\left(  x\right)  \leq g\left(  x\right)  \leq\bar{g}_{\ast
}\left(  x\right)  $ for all $x\geq a$, then there are constants $c_{u}\left(
\bar{\alpha},\bar{\beta},\bar{\gamma}\right)  >0$ and $c_{l}\left(
\bar{\alpha},\alpha,\beta,\gamma\right)  >0$ such that for large $z$,
\[
\frac{c_{l}}{z^{1+1/\alpha}}\leq\mathbf{P}[X>z]\leq\frac{c_{u}}{z^{1+1/\bar
{\alpha}}}.
\]

\end{enumerate}
\end{corollary}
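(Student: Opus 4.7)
The plan is to derive both parts from the Pearson-specific tail estimates already packaged in Corollary~\ref{thm:TailUBoundP2} (upper) and Corollary~\ref{cor:TailLBoundP} (lower), and then feed the resulting comparisons $\mathbf{P}[X>z]\asymp\Phi_{\ast}(z)$ into the sharp power-law asymptotic $\Phi_{\ast}(z)\sim C\,z^{-1-1/\alpha}$ for the tail of a centered Pearson distribution with $\alpha>0$ and right-infinite support, provided by Lemma~\ref{lem:phitail} in the Appendix. The only preliminary point needing care is that, although the hypotheses of the earlier corollaries are phrased as almost-sure inequalities on $G$, those results hold just as well under the weaker hypotheses $g(X)\geq g_{\ast}(X)$ or $g(X)\leq g_{\ast}(X)$ a.s.: every occurrence of $G$ in the relevant proofs appears inside an expectation of the form $\mathbf{E}[\mathbf{1}_{X\in A}f'(X)G]$, and since $\mathbf{1}_{X\in A}f'(X)$ is $\sigma(X)$-measurable, conditioning on $X$ replaces $G$ by $g(X)=\mathbf{E}[G\mid X]$. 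This is exactly how the hypothesis on $g$ in Corollary~\ref{cor:TailX2} enters the machinery.

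With this reformulation in hand, the upper bound in both parts follows by applying Corollary~\ref{thm:TailUBoundP2} with $\bar{g}_{\ast}$ in the role of $g_{\ast}$: for any $K>(1-\bar{\alpha})/(1-2\bar{\alpha})$ and all large $z$, $\mathbf{P}[X>z]\leq K\,\bar{\Phi}_{\ast}(z)$, and Lemma~\ref{lem:phitail} then yields $\bar{\Phi}_{\ast}(z)\leq C(\bar{\alpha},\bar{\beta},\bar{\gamma})\,z^{-1-1/\bar{\alpha}}$ for large $z$; choosing $c_{u}:=KC$ gives the upper bound of both parts~1 and~2. For the lower bound in part~2, we apply Corollary~\ref{cor:TailLBoundP} with the given $g_{\ast}$. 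Its auxiliary hypothesis that $g(z)\leq z^{2}/c$ for some fixed $c>2$ and all large $z$ is supplied for free by the upper assumption $g(z)\leq \bar{\alpha}z^{2}+\bar{\beta}z+\bar{\gamma}$ together with $\bar{\alpha}<1/2$: since $g(z)/z^{2}\to\bar{\alpha}<1/2$, any fixed $c\in(2,1/\bar{\alpha})$ works once $z$ is large enough. Corollary~\ref{cor:TailLBoundP} then produces a positive constant $c''=c''(\alpha,\beta,\gamma,\bar{\alpha})$ with $\mathbf{P}[X>z]\geq c''\,\Phi_{\ast}(z)$ for large $z$, which Lemma~\ref{lem:phitail} translates into $\mathbf{P}[X>z]\geq c_{l}\,z^{-1-1/\alpha}$, completing part~2.

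Most of the quantitative content has already been done in the earlier corollaries, so the remaining work is essentially bookkeeping: verifying that the hypotheses-on-$g$ formulations used here suffice in place of the hypotheses-on-$G$ in those earlier proofs (the conditioning remark above), and verifying that the density-decay hypothesis of Corollary~\ref{cor:TailLBoundP} is a free consequence of the pointwise upper bound on $g$. The only genuine analytic input is Lemma~\ref{lem:phitail}, which converts the abstract comparisons $\mathbf{P}[X>z]\asymp \Phi_{\ast}(z),\,\bar{\Phi}_{\ast}(z)$ into the explicit $z^{-1-1/\alpha}$ and $z^{-1-1/\bar{\alpha}}$ power laws in the statement; accordingly, the chief obstacle in writing up the proof is simply ensuring that the Pearson tail asymptotic is deployed in exactly the regime ($\alpha,\bar{\alpha}\in(0,1/2)$, $b=+\infty$) where it is applicable.
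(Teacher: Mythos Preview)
Your proposal is correct and follows essentially the same route as the paper: invoke Corollary~\ref{thm:TailUBoundP2} for the upper bound and Corollary~\ref{cor:TailLBoundP} for the lower bound (using $\bar\alpha<1/2$ to supply the auxiliary condition $g(z)\leq z^{2}/c$ with $c>2$), then convert both comparisons into explicit power laws via Lemma~\ref{lem:phitail}. Your conditioning remark, which justifies replacing the $G$-hypotheses of the earlier corollaries by the $g$-hypotheses stated here, is a point the paper's proof leaves implicit; one minor slip is that from $g\le\bar g_{\ast}$ you only get $\limsup_{z\to\infty}g(z)/z^{2}\le\bar\alpha$, not a limit equal to $\bar\alpha$, but this is already enough for the needed inequality $g(z)\le z^{2}/c$.
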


\begin{proof}
Let $\Phi_{\ast\alpha,\beta,\gamma}$ and $\Phi_{\ast\bar{\alpha},\bar{\beta
},\bar{\gamma}}$ be the probability tails of the Pearson distributions
corresponding to $g_{\ast}$ and $\bar{g}_{\ast}$ respectively. We can prove
Point 1 by using Corollary \ref{thm:TailUBoundP2} and Lemma \ref{lem:phitail}.
There is a constant $k_{u}\left(  \bar{\alpha},\bar{\beta},\bar{\gamma
}\right)  >0$ such that, for any $K>\frac{1-\bar{\alpha}}{1-2\bar{\alpha}}$,
for large $z$,
\[
\mathbf{P}[X>z]\leq K\Phi_{\ast\bar{\alpha},\bar{\beta},\bar{\gamma}}\left(
z\right)  \leq K\cdot\frac{k_{u}}{z^{1+1/\bar{\alpha}}}\text{.}%
\]
The upper bound in Point 2 follows directly from Point 1 because of the
condition $g\left(  x\right)  \leq\bar{g}_{\ast}\left(  x\right)  $. This same
condition also allows us to give a lower bound for $\mathbf{P}[X>z]$. Fix any
$c\in\left(  2,1/\bar{\alpha}\right)  $. By Corollary \ref{cor:TailLBoundP}
and Lemma \ref{lem:phitail}, there is a constant $k_{l}\left(  \alpha
,\beta,\gamma\right)  >0$ such that for large $z$,
\[
\mathbf{P}[X>z]\geq\frac{\left(  c-2\right)  -\alpha\left(  c-2\right)
}{c-\alpha\left(  c-2\right)  }\Phi_{\ast\alpha,\beta,\gamma}\left(  z\right)
\geq\frac{\left(  c-2\right)  -\alpha\left(  c-2\right)  }{c-\alpha\left(
c-2\right)  }\cdot\frac{k_{l}}{z^{1+1/\alpha}}\text{.}%
\]

\end{proof}

\begin{corollary}
\label{cor:TailX1}Let $g_{\ast}\left(  x\right)  :=\left(  \beta
x+\gamma\right)  _{+}$ and $\bar{g}_{\ast}\left(  x\right)  =\left(
\bar{\beta}x+\bar{\gamma}\right)  _{+}$ be two functions corresponding to
Pearson distributions (e.g. via (\ref{geestar})) where $\beta,\bar{\beta
},\gamma,\Bar{\gamma}>0$ and $a=-\gamma/\beta$.

\begin{enumerate}
\item If $g\left(  x\right)  \leq\bar{g}_{\ast}\left(  x\right)  $ for all
$x$, then there is a constant $c_{u}\left(  \bar{\beta},\bar{\gamma}\right)
>0$ such that for large $z$,
\[
\mathbf{P}[X>z]\leq c_{u}z^{-1-\bar{\gamma}/\bar{\beta}^{2}}e^{-z/\bar{\beta}%
}.
\]

\item If $g_{\ast}\left(  x\right)  \leq g\left(  x\right)  \leq\bar{g}_{\ast
}\left(  x\right)  $ for all $x$, then there are constants $c_{u}\left(
\bar{\beta},\bar{\gamma}\right)  >c_{l}\left(  \beta,\gamma\right)  >0$ such
that for large $z$,
\[
c_{l}~z^{-1-\gamma/\beta^{2}}e^{-z/\beta}\leq\mathbf{P}[X>z]\leq
c_{u}z^{-1-\bar{\gamma}/\bar{\beta}^{2}}e^{-z/\bar{\beta}}.
\]

\end{enumerate}
\end{corollary}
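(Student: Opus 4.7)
The plan is to mirror the proof of Corollary \ref{cor:TailX2}, specialized to the degenerate case where the quadratic coefficient in the sandwich vanishes. The three tools are Corollary \ref{thm:TailUBoundP2} for the upper bound, Corollary \ref{cor:TailLBoundP} for the lower bound, and the Gamma-type asymptotic $\Phi_{\ast}(z)\sim k\,z^{-1-\gamma/\beta^{2}}e^{-z/\beta}$ supplied by Lemma \ref{lem:phitail} in the Appendix for the Pearson distribution associated with the linear $g_{\ast}(x)=\beta x+\gamma$ on support $(-\gamma/\beta,\infty)$ (a centered shifted Gamma law).

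For Point 1, since $\bar{\alpha}=0<\tfrac{1}{2}$, Corollary \ref{thm:TailUBoundP2} applies to the upper envelope $\bar{g}_{\ast}$: the hypothesis $g\leq\bar{g}_{\ast}$ (equivalently, $G\leq\bar{g}_{\ast}(X)$ a.s., via $g(X)=\mathbf{E}[G\mid X]$) yields, for any $K>(1-\bar{\alpha})/(1-2\bar{\alpha})=1$ and all large $z$, the bound $\mathbf{P}[X>z]\leq K\,\bar{\Phi}_{\ast}(z)$. Substituting the asymptotic from Lemma \ref{lem:phitail} for $\bar{\Phi}_{\ast}$ and absorbing $K$ into the prefactor produces the constant $c_{u}(\bar{\beta},\bar{\gamma})$ in the stated form.

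For Point 2, the upper bound is Point 1. For the lower bound I invoke Corollary \ref{cor:TailLBoundP} with the lower envelope $g_{\ast}$: the hypothesis $G\geq g_{\ast}(X)$ is supplied, and the supplementary growth hypothesis ``$g(z)\leq z^{2}/c$ for some $c>2$ and large $z$'' is immediate, because $g(z)\leq\bar{g}_{\ast}(z)=\bar{\beta}z+\bar{\gamma}$ is linear, so $g(z)/z^{2}\to 0$ and any fixed $c>2$ works once $z$ is large enough. Specializing to $\alpha=0$ gives $Q(z)=z^{2}+\gamma$, and the prefactor $(c-2)Q(z)/[(c-2)Q(z)+2z^{2}]$ tends to $(c-2)/c>0$; combining with the Gamma-type asymptotic $\Phi_{\ast}(z)\sim k_{l}\,z^{-1-\gamma/\beta^{2}}e^{-z/\beta}$ yields the required lower bound $c_{l}\,z^{-1-\gamma/\beta^{2}}e^{-z/\beta}$.

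The only real obstacle is bookkeeping rather than a new idea: checking that both corollaries still deliver a strictly positive multiplicative constant in the degenerate regime $\alpha=\bar{\alpha}=0$ (they do, since $(c-2)/c>0$ for any $c>2$ and $(1-\bar{\alpha})/(1-2\bar{\alpha})=1$ allows any $K>1$), and verifying via Lemma \ref{lem:phitail} that the centered Pearson associated with a linear $g_{\ast}$ really does have the gamma-type tail claimed. No new analytic ingredient beyond those already exploited in the proof of Corollary \ref{cor:TailX2} should be required.
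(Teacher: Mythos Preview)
Your proposal is correct and follows essentially the same route as the paper: invoke Corollary~\ref{thm:TailUBoundP2} (with $\bar{\alpha}=0$, so any $K>1$) together with Lemma~\ref{lem:phitail} for Point~1, and for the lower bound in Point~2 invoke Corollary~\ref{cor:TailLBoundP} (with $\alpha=0$, so the prefactor tends to $(c-2)/c$) together with Lemma~\ref{lem:phitail}, noting that the linear upper envelope $\bar{g}_{\ast}$ guarantees the auxiliary growth condition $g(z)\leq z^{2}/c$ for any fixed $c>2$. One small caveat: the parenthetical ``equivalently, $G\leq\bar{g}_{\ast}(X)$ a.s.'' overstates things---$g\leq\bar{g}_{\ast}$ is strictly weaker than $G\leq\bar{g}_{\ast}(X)$ a.s.---but since the proofs of the underlying theorems only use $G$ through $\mathbf{E}[G\mid X]=g(X)$, the hypothesis on $g$ is in fact sufficient, so no harm is done.
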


\begin{proof}
Let $\Phi_{\ast\beta,\gamma}$ and $\Phi_{\ast\bar{\beta},\bar{\gamma}}$ be as
in the proof of the previous corollary, noting here that $\alpha=\bar{\alpha
}=0$. The proof of Point 1 is similar to the proof of Point 1 in Corollary
\ref{cor:TailX2}. The upper bound in Point 2 follows from Point 1 above and
Point 1 of Corollary \ref{cor:TailX2}. For the lower bound of Point 2, if we
fix any $c>2$, then by Corollary \ref{cor:TailLBoundP} and Lemma
\ref{lem:phitail}, there is a constant $k_{l}\left(  \beta,\gamma\right)  >0$
such that for large $z$,
\[
\mathbf{P}[X>z]\geq\frac{c-2}{c}\Phi_{\ast\beta,\gamma}\geq\frac{c-2}{c}%
~k_{l}~z^{-1-\gamma/\beta^{2}}e^{-z/\beta}\text{.}%
\]

\end{proof}

\bigskip

\begin{remark}
The above corollary improves on a recently published estimate: in
\cite[Theorem 4.1]{NV}, it was proved that if the law of $X\in\mathbf{D}%
^{1,2}$ has a density and if $g(X)\leq\beta X+\gamma$ a.s. (with $\beta\geq0$
and $\gamma>0$), then for all $z>0$, $\mathbf{P}[X>z]\leq\exp\left(
-\frac{z^{2}}{2\beta z+2\gamma}\right)  $. Using $g_{\ast}\left(  z\right)
=\left(  \beta z+\gamma\right)  _{+}$, Point 1 in Corollary \ref{cor:TailX1}
gives us an asymptotically better upper bound, with exponential rate
$e^{-z/\beta}$ instead of $e^{-z/2\beta}$. Our rate is sharp, since our upper
bound has the same exponential asymptotics as the corresponding Pearson tail,
which is a Gamma tail.
\end{remark}

\subsection{Asymptotic results}

Point 2 of Corollary \ref{cor:TailX1} shows the precise behavior, up to a
possibly different leading power term which is negligible compared to the
exponential, of any random variable in $\mathbf{D}^{1,2}$ whose function $g$
is equal to a Pearson function up to some uncertainty on the $\gamma$ value.
More generally, one can ask about tail asymptotics for $X$ when $g$ is
asymptotically linear, or even asymptotically quadratic. Asymptotic
assumptions on $g$ are not as strong as assuming bounds on $g$ which are
uniform in the support of $X$, and one cannot expect them to imply statements
that are as strong as in the previous subsection. We now see that in order to
prove tail asymptotics under asymptotic assumptions, it seems preferable to
revert to the techniques developed in \cite{Viens}. We first propose upper
bound results for tail asymptotics, which follow from Point 1 of Corollary
\ref{cor:TailX2} and Point 1 of Corollary \ref{cor:TailX1}. Then for full
asymptotics, Point 2 of each of these corollaries do not seem to be
sufficient, while \cite[Corollary 4.5]{Viens} can be applied immediately.
Recall that in what follows $X\in\mathbf{D}^{1,2}$ is centered, has a density,
and support $\left(  a,\infty\right)  $, and $g$ is defined by $g\left(
x\right)  :=\mathbf{E}\left[  G|X=x\right]  $, or equivalently, by
(\ref{geeintro}).

\begin{proposition}
~

\begin{enumerate}
\item Suppose $\limsup_{z\rightarrow+\infty}g\left(  z\right)  /z^{2}%
=\alpha\in\left(  0,1/2\right)  $. Then $\limsup_{z\rightarrow+\infty}%
\frac{\ln\mathbf{P}[X>z]}{\ln z}\leq-\left(  1+\frac{1}{\alpha}\right)  .$

\item Suppose $\limsup_{z\rightarrow+\infty}g\left(  z\right)  /z=\beta>0$.
Then $\limsup_{z\rightarrow+\infty}\frac{\ln\mathbf{P}[X>z]}{z}\leq-\frac
{1}{\beta}.$
\end{enumerate}
\end{proposition}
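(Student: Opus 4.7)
My plan is to reduce each point to the corresponding upper-bound corollary from the previous subsection: Point 1 will follow from Corollary \ref{cor:TailX2} Point 1, and Point 2 from Corollary \ref{cor:TailX1} Point 1. The technical issue is that those corollaries require the pointwise domination $g(x)\leq \bar{g}_{\ast}(x)$ on the entire support of $X$, whereas the present hypotheses are only asymptotic at $+\infty$. I would handle this by fixing a parameter strictly larger than $\alpha$ (resp.\ $\beta$), using the $\limsup$ assumption to obtain the asymptotic bound beyond some level $M_{0}$, and absorbing the behavior of $g$ on the remaining piece $(a,M_{0}]$ by inflating the constant term of a suitably chosen Pearson $\bar{g}_{\ast}$.

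For Point 1, I would fix any $\alpha^{\prime}\in(\alpha,1/2)$. The $\limsup$ assumption provides $M_{0}$ with $g(z)\leq \alpha^{\prime}z^{2}$ for $z\geq M_{0}$. I would then set $\bar{g}_{\ast}(z):=\alpha^{\prime}z^{2}+C$, with $C$ large enough that $g(x)\leq \bar{g}_{\ast}(x)$ on $(a,M_{0}]$ as well. This $\bar{g}_{\ast}$ is the Stein function of a symmetric centered Pearson (Student-type) distribution whose support is all of $\mathbf{R}$, and the condition $\alpha^{\prime}<1/2$ secures the third-moment hypothesis on the reference r.v. Corollary \ref{cor:TailX2} Point 1 then yields $\mathbf{P}[X>z]\leq c_{u}z^{-1-1/\alpha^{\prime}}$ for large $z$; taking $\ln$, dividing by $\ln z$, and letting $z\rightarrow +\infty$ gives $\limsup_{z\to\infty}(\ln \mathbf{P}[X>z])/\ln z \leq -(1+1/\alpha^{\prime})$, and letting $\alpha^{\prime}\downarrow \alpha$ concludes.

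Point 2 proceeds analogously, now with $\bar{g}_{\ast}(z):=(\beta^{\prime}z+\bar{\gamma})_{+}$ for any fixed $\beta^{\prime}>\beta$, choosing $\bar{\gamma}$ large enough to dominate $g$ on $(a,M_{0}]$ and to place the left endpoint $-\bar{\gamma}/\beta^{\prime}$ of the corresponding Gamma distribution at or below $a$. Corollary \ref{cor:TailX1} Point 1 gives $\mathbf{P}[X>z]\leq c_{u}z^{-1-\bar{\gamma}/\beta^{\prime\,2}}e^{-z/\beta^{\prime}}$, so $(\ln \mathbf{P}[X>z])/z \leq -1/\beta^{\prime}+O((\ln z)/z)$, whence the result follows after letting $\beta^{\prime}\downarrow \beta$.

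The main obstacle is verifying the finiteness of the absorbing constants $C$ and $\bar{\gamma}$ on the whole of $(a,M_{0}]$. When $a>-\infty$ this is immediate from continuity and nonnegativity of $g$ on a compact interval. When $a=-\infty$, the argument is still viable for Point 1 since the Student-type reference distribution has full support, provided one uses the representation $g(x)=\rho^{-1}(x)\int_{x}^{\infty}y\rho(y)\,dy$ to verify that $g(x)/x^{2}$ remains bounded as $x\rightarrow -\infty$; for Point 2 the natural setting has $a>-\infty$, since a linear $g_{\ast}$ corresponds to a Gamma-type tail and the comparison is intrinsic to that regime.
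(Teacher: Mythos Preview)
Your proposal is correct and takes essentially the same approach as the paper: fix a parameter strictly above the $\limsup$, add a constant so that the corresponding Pearson $\bar g_{\ast}$ dominates $g$ on the whole support, invoke Corollary~\ref{cor:TailX2} Point~1 (resp.\ Corollary~\ref{cor:TailX1} Point~1), take logarithms, and let the parameter descend to the limit. The paper simply asserts the existence of the absorbing constant $\gamma_{\varepsilon}$ without the discussion in your final paragraph; your identification of the left-endpoint issue when $a=-\infty$ (and of the support-containment constraint for the Gamma reference in Point~2) is in fact more careful than the paper, which glosses over both points.
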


\begin{proof}
Fix $\varepsilon\in\left(  0,1/2-\alpha\right)  $. Then $g\left(  x\right)
<\left(  \alpha+\varepsilon\right)  x^{2}$ if $x$ is large enough. Therefore,
there exists a constant $\gamma_{\varepsilon}>0$ such that $g\left(  x\right)
<\left(  \alpha+\varepsilon\right)  x^{2}+\gamma_{\varepsilon}$ for all
$x$.\ Let $Z_{\varepsilon}$ be the Pearson random variable for which $g_{\ast
}\left(  z\right)  =\left(  \alpha+\varepsilon\right)  z^{2}+\gamma
_{\varepsilon}$. This falls under Case 5 in Appendix \ref{PearsonList}, so its
support is $\left(  -\infty,\infty\right)  $, which then contains the support
of $X$. From Point 1 of Corollary \ref{cor:TailX2}, there is a constant
$c_{\varepsilon}$ depending on $\varepsilon$ such that for $z$ large enough,
\[
\mathbf{P}[X>z]\leq c_{\varepsilon}z^{-1-\frac{1}{\alpha+\varepsilon}}\text{.}%
\]
We then have
\begin{align*}
\ln\mathbf{P}[X>z]  &  \leq\ln c_{\varepsilon}-\left(  1+\frac{1}%
{\alpha+\varepsilon}\right)  \ln z,\\
\frac{\ln\mathbf{P}[X>z]}{\ln z}  &  \leq\frac{\ln c_{\varepsilon}}{\ln
z}-\left(  1+\frac{1}{\alpha+\varepsilon}\right)  ,\\
\limsup_{z\rightarrow\infty}\frac{\ln\mathbf{P}[X>z]}{\ln z}  &  \leq-\left(
1+\frac{1}{\alpha+\varepsilon}\right)  \text{.}%
\end{align*}
Since $\varepsilon$ can be arbitrarily close to $0$, Point 1 of the corollary
is proved. The proof of Point 2 is entirely similar, following from Corollary
\ref{cor:TailX1}, which refers to Case 2 of the Pearson distributions given in
Appendix \ref{PearsonList}. This corollary could also be established by using
results from \cite{Viens}.
\end{proof}

\bigskip

Our final result gives full tail asymptotics. Note that it is not restricted
to linear and quadratic behaviors.

\begin{theorem}
~

\begin{enumerate}
\item Suppose $\lim_{z\rightarrow+\infty}g\left(  z\right)  /z^{2}=\alpha
\in\left(  0,1\right)  $. Then $\lim_{z\rightarrow+\infty}\frac{\ln
\mathbf{P}[X>z]}{\ln z}=-\left(  1+\frac{1}{\alpha}\right)  .$

\item Suppose $\lim_{z\rightarrow+\infty}g\left(  z\right)  /z^{p}=\beta>0$
for some $p\in\lbrack0,1)$. Then $\lim_{z\rightarrow+\infty}\frac
{\ln\mathbf{P}[X>z]}{z^{2-p}}=-\frac{1}{\beta(2-p)}.$
\end{enumerate}
\end{theorem}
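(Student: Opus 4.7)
The strategy is to invoke \cite[Corollary 4.5]{Viens}, the tail asymptotic result from the predecessor paper, which is built to handle precisely the sort of purely asymptotic conditions on $g$ assumed here. The reason Point 2 of Corollaries \ref{cor:TailX2} and \ref{cor:TailX1} is not sufficient is that those corollaries require uniform two-sided bounds $g_\ast(x) \le g(x) \le \bar g_\ast(x)$ on the whole support, whereas here we only know a limit as $z \to +\infty$. The matching $\limsup$ direction in each point is moreover already captured by the preceding proposition (at least over the range where it applies), which serves as a consistency check.

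The core ingredient is the density formula from \cite{NV}: on the support of $X$, differentiation of the identity $g(x)\rho(x) = \int_x^\infty y\rho(y)\, dy$ gives $(g(x)\rho(x))' = -x\rho(x)$, hence $\rho'(x)/\rho(x) = -(x + g'(x))/g(x)$, and so
$$\rho(x) = \frac{\rho(0)\, g(0)}{g(x)} \exp\left(-\int_0^x \frac{y}{g(y)}\, dy\right).$$
Consequently $\ln \rho(x) = -\ln g(x) - \int_0^x y/g(y)\, dy + \mathrm{const}$, and asymptotic information on $g$ at $+\infty$ transfers, via this integral, to the tail $\mathbf{P}[X > z] = \int_z^\infty \rho(x)\, dx$. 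For Point 1, $g(y) \sim \alpha y^2$ gives $\int_0^x y/g(y)\, dy \sim (1/\alpha) \ln x$ and $\ln g(x) \sim 2 \ln x$, hence $\ln \rho(x) \sim -(2 + 1/\alpha) \ln x$; integrating yields $\ln \mathbf{P}[X > z]/\ln z \to -(1 + 1/\alpha)$. For Point 2, $g(y) \sim \beta y^p$ with $p \in [0,1)$ gives $\int_0^x y/g(y)\, dy \sim x^{2-p}/(\beta(2-p))$, which dominates the negligible $\ln g(x) = O(\ln x)$ contribution; integration of $\rho$ is controlled by the exponential, producing $\ln \mathbf{P}[X > z]/z^{2-p} \to -1/(\beta(2-p))$.

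The main technical obstacle is the careful $\varepsilon$-management: from the limit hypothesis one only gets $(\alpha - \varepsilon) y^2 \le g(y) \le (\alpha + \varepsilon) y^2$ (and analogously for Point 2) for $y$ above some threshold $y_\varepsilon$. One must bracket the integral $\int_0^x y/g(y)\, dy$ accordingly, splitting off the harmless compact piece $\int_0^{y_\varepsilon}$, feed the resulting bounds into the density formula and then into the tail integral $\int_z^\infty \rho$, take logarithms normalized by $\ln z$ or by $z^{2-p}$, pass to the limit in $z$, and only then send $\varepsilon \to 0$ to close the gap between the $\liminf$ and the $\limsup$. These computations must be done symmetrically on both sides to land exactly on the stated limits; \cite[Corollary 4.5]{Viens} packages exactly this machinery, so the proof reduces to verifying its hypotheses against our asymptotic assumptions on $g$.
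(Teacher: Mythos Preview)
Your proposal is correct and matches the paper's own proof: both reduce the theorem to \cite[Corollary 4.5]{Viens} after observing that the limit hypothesis yields two-sided $\varepsilon$-bounds $(\alpha-\varepsilon)z^2 \le g(z) \le (\alpha+\varepsilon)z^2$ (resp.\ $(\beta-\varepsilon)z^p \le g(z) \le (\beta+\varepsilon)z^p$) for large $z$. Your additional sketch of the density-formula mechanism behind that corollary is a helpful unpacking, though the paper itself simply cites the result and leaves details to the reader.
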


\begin{proof}
Since for any $\varepsilon\in(0,\min(\alpha,1-\alpha))$, there exists $z_{0}$
such that $z>z_{0}$ implies $\left(  \alpha-\varepsilon\right)  z^{2}\leq
g\left(  z\right)  \leq\left(  \alpha+\varepsilon\right)  z^{2}$, the
assumptions of Points 2 and 4 (a) in \cite[Corollary 4.5]{Viens} are
satisfied, and Point 1 of the Theorem follows easily. Point 2 of the Theorem
follows identically, by invoking Points 3 and 4 (b) in \cite[Corollary
4.5]{Viens}. All details are left to the reader.
\end{proof}

\bigskip

\section{Appendix}

\subsection{Proofs of lemmas}

\begin{proof}
[Proof of lemma \ref{lemgstar}]\emph{Proof of point 1}. If $0\leq x<b$, then
clearly $g_{\ast}(x)>0$. If $a<x<0$, we claim that $g_{\ast}(x)>0$ still.
Suppose we have the opposite: $g_{\ast}(x)\leq0$. Then $\int_{x}^{b}%
y\rho_{\ast}(y)\,dy=g_{\ast}(x)\rho_{\ast}(x)\leq0.$ Since $\int_{a}^{x}%
y\rho_{\ast}(y)\,dy<0$, then $\int_{a}^{b}y\rho_{\ast}(y)\,dy<0$,
contradicting $\mathbf{E}[Z]=0.$ Thus, $g_{\ast}(x)\geq0$ for all $x$, and
$g_{\ast}(x)>0$ if and only if $a<x<b$.

\emph{Proof of point 2}. Trivial.

\emph{Proof of point 3}. This is immediate since $\lim\limits_{x\downarrow
a}g_{\ast}(x)\rho_{\ast}(x)=\lim\limits_{x\downarrow a}\int_{x}^{b}y\rho
_{\ast} (y)\,dy=\mathbf{E}[Z]$ and similarly for $\lim\limits_{x\uparrow
b}g_{\ast}(x)\rho_{\ast}(x)=-\mathbf{E}[Z]$.
\end{proof}

\begin{proof}
[Proof of Lemma \ref{lemschou}]It is easy to verify that (\ref{stein_sol1})
and (\ref{stein_sol2})\ are solutions to Stein's equation (\ref{stein}). To
show that they are the same, let $\varphi(z):=g_{\ast}(z)\rho_{\ast}%
(z)=\int_{z}^{b}w\rho_{\ast}(w)\,dw$ for $z\in(a,b)$. Then
\[
\frac{\varphi^{\prime}(z)}{\varphi(z)}=-\frac{z\rho_{\ast}(z)}{g_{\ast}%
(z)\rho_{\ast} (z)}=-\frac{z}{g_{\ast}(z)}.
\]
Integrating over $(y,x)\subseteq(a,b)$ leads to
\begin{equation}
\int_{y}^{x}\frac{z}{g_{\ast}(z)}\,dz=\log\frac{\varphi(y)}{\varphi(x)}%
=\log\frac{g_{\ast}(y)\rho_{\ast}(y)}{g_{\ast}(x)\rho_{\ast}(x)}
\label{usenext}%
\end{equation}
and so
\[
\frac{e^{\int_{y}^{x}\frac{z\,dz}{g_{\ast}(z)}}}{g_{\ast}(y)}=\frac{1}%
{g_{\ast}(y)}\cdot\frac{g_{\ast}(y)\rho_{\ast}(y)}{g_{\ast}(x)\rho_{\ast}%
(x)}=\frac{\rho_{\ast} (y)}{g_{\ast}(x)\rho_{\ast}(x)}.
\]
The derivative formula (\ref{deriv1}) comes via an immediate calculation%
\begin{align*}
f^{\prime}(x)  &  =-\frac{[g_{\ast}(x)\rho_{\ast}(x)]^{\prime}}{[g_{\ast
}(x)\rho_{\ast}(x)]^{2}}\int_{a}^{x}\left(  h(y)-\mathbf{E}[h(Z)]\right)
\rho_{\ast}(y)\,dy+\frac{\left(  h(x)-\mathbf{E}[h(Z)]\right)  \rho_{\ast}%
(x)}{g_{\ast}(x)\rho_{\ast}(x)}\\
&  =-\frac{-x\rho_{\ast}(x)}{[g_{\ast}(x)\rho_{\ast}(x)]^{2}}\int_{a}%
^{x}\left(  h(y)-\mathbf{E}[h(Z)]\right)  \rho_{\ast}(y)\,dy+\frac
{h(x)-\mathbf{E}[h(Z)]}{g_{\ast}(x)}\\
&  =\frac{x}{[g_{\ast}(x)]^{2}\rho_{\ast}(x)}\int_{a}^{x}\left(
h(y)-\mathbf{E}[h(Z)]\right)  \rho_{\ast}(y)\,dy+\frac{h(x)-\mathbf{E}%
[h(Z)]}{g_{\ast}(x)}.
\end{align*}

\end{proof}

\begin{proof}
[Proof of Lemma \ref{lemchar}]From (\ref{usenext}) in the previous proof, we
have%
\[
\int_{0}^{b}\frac{z}{g_{\ast}(z)}\,dz=\lim_{x\nearrow b}\int_{0}^{x}\frac
{z}{g_{\ast}(z)}\,dz=\lim_{x\nearrow b}\log\frac{g_{\ast}(0)\rho_{\ast}%
(0)}{g_{\ast}(x)\rho_{\ast}(x)}=g_{\ast}(0)\rho_{\ast}(0)-\lim_{x\nearrow
b}\log\left[  g_{\ast}(x)\rho_{\ast}(x)\right]  =\infty
\]
and%
\[
\int_{a}^{0}\frac{z}{g_{\ast}(z)}\,dz=\lim_{x\searrow a}\int_{x}^{0}\frac
{z}{g_{\ast}(z)}\,dz=\lim_{x\searrow a}\log\frac{g_{\ast}(x)\rho_{\ast}%
(x)}{g_{\ast}(0)\rho_{\ast}(0)}=\lim_{x\searrow a}\log\left[  g_{\ast}%
(x)\rho_{\ast}(x)\right]  -g_{\ast}(0)\rho_{\ast}(0)=-\infty\text{.}%
\]

\end{proof}

\begin{proof}
[Proof of Lemma \ref{lem:phibound}]We prove (\ref{tail1a}) first. It is
trivially true if $x\notin\left[  a,b\right]  $, so suppose $x\in\left(
a,b\right)  $. Let%
\[
m(x):=\Phi_{\ast}(x)-\frac{x-g_{\ast}^{\prime}(x)}{Q(x)}\cdot g_{\ast}%
(x)\rho_{\ast}(x).
\]

By a standard calculus proof, we will show that $m^{\prime}(x)\leq0$ so that
$m(x)\geq\lim\limits_{y\rightarrow b}m(y)$. The result follows after observing
that $\lim\limits_{y\rightarrow b}m(y)=0$. This is true since $\lim
\limits_{y\rightarrow b}g_{\ast}(y)\rho_{\ast}(y)=0$ and $\lim
\limits_{y\rightarrow b}\Phi_{\ast}(x)=0$. Now we show that $m^{\prime}%
(x)\leq0$.%

\begin{align*}
m^{\prime}(x)  &  =-\rho_{\ast}(x)-g_{\ast}(x)\rho_{\ast}(x)\left[
\frac{x-g_{\ast}^{\prime}(x)}{x^{2}-xg_{\ast}^{\prime}(x)+g_{\ast}(x)}\right]
^{\prime}-\frac{x-g_{\ast}^{\prime}(x)}{Q(x)}\left[  g_{\ast}(x)\rho_{\ast
}(x)\right]  ^{\prime}\\
m^{\prime}  &  =-\rho_{\ast}-g_{\ast}\rho_{\ast}\left[  \frac{\left(  x\left(
x-g_{\ast}^{\prime}\right)  +g_{\ast}\right)  \left(  1-g_{\ast}^{\prime
\prime}\right)  -\left(  x-g_{\ast}^{\prime}\right)  \left(  2x-xg_{\ast
}^{\prime\prime}\right)  }{Q^{2}}\right]  -\frac{x-g_{\ast}^{\prime}}%
{Q}\left[  -x\rho_{\ast}\right] \\
\frac{Q^{2}}{\rho_{\ast}}m^{\prime}  &  =-Q^{2}-g_{\ast}\left[  \left(
x-g_{\ast}^{\prime}\right)  \left(  x-xg_{\ast}^{\prime\prime}-2x+xg_{\ast
}^{\prime\prime}\right)  +g_{\ast}\left(  1-g_{\ast}^{\prime\prime}\right)
\right]  +Qx\left(  x-g_{\ast}^{\prime}\right) \\
&  =\left[  -x^{2}\left(  x-g_{\ast}^{\prime}\right)  ^{2}-2xg_{\ast}\left(
x-g_{\ast}^{\prime}\right)  -g_{\ast}^{2}\right]  +xg_{\ast}\left(  x-g_{\ast
}^{\prime}\right)  -g_{\ast}^{2}\left(  1-g_{\ast}^{\prime\prime}\right) \\
&  +\left[  x^{2}\left(  x-g_{\ast}^{\prime}\right)  ^{2}+xg_{\ast}\left(
x-g_{\ast}^{\prime}\right)  \right] \\
&  =-g_{\ast}^{2}-g_{\ast}^{2}\left(  1-g_{\ast}^{\prime\prime}\right)
=-g_{\ast}^{2}\left(  2-g_{\ast}^{\prime\prime}\right)  \leq0\text{.}%
\end{align*}

To prove (\ref{tail1b}) (again, it suffices to prove this for $x\in\left(
a,b\right)  $), let
\[
n(x):=1-\Phi_{\ast}(x)-\frac{g_{\ast}^{\prime}(x)-x}{Q(x)}\cdot g_{\ast
}(x)\rho_{\ast}(x)=1-m(x)
\]
so $n^{\prime}(x)=-m^{\prime}(x)\geq0$. $n$ is then nondecreasing so
$n(x)\geq\lim\limits_{x\rightarrow a}n(x)=0$.

Now we prove (\ref{tail2}). If $x>0$,%
\[
\Phi_{\ast}(x)=\int_{x}^{\infty}\rho_{\ast}(y)\,dy\leq\frac{1}{x}\int
_{x}^{\infty}y\rho_{\ast}(y)\,dy=\frac{1}{x}\cdot g_{\ast}(x)\rho_{\ast}(x).
\]
On the other hand, if $x<0$,
\[
1-\Phi_{\ast}(x)=\int_{-\infty}^{x}\rho_{\ast}(y)\,dy\leq\frac{1}{x}%
\int_{-\infty}^{x}y\rho_{\ast}(y)\,dy=-\frac{1}{x}\cdot g_{\ast}(x)\rho_{\ast
}(x).
\]
This proves (\ref{tail3}).
\end{proof}

\begin{proof}
[Proof of last bullet point on page \pageref{bullets}]\label{bulletproof}We
replicate here the method commonly used to find a recursive formula for the
moments. See for example \cite{EJ} and \cite{Ord}. Cross-multiplying the terms
in (\ref{pearson2}), multiplying by $x^{r}$ and integrating over the support
gives us
\begin{align*}
-\int_{a}^{b}\left[  (2\alpha+1)z^{r+1}+\beta z^{r}\right]  \rho_{\ast
}(z)\,dz  &  =\int_{a}^{b}\left(  \alpha z^{r+2}+\beta z^{r+1}+\gamma
z^{r}\right)  \rho_{\ast}^{\prime}(z)\,dz\\
-(2\alpha+1)\mathbf{E}\left[  Z^{r+1}\right]  -\beta\mathbf{E}\left[
Z^{r}\right]   &  =\left.  \left(  \alpha z^{r+2}+\beta z^{r+1}+\gamma
z^{r}\right)  \rho_{\ast}(z)\right\vert _{a}^{b}\\
&  -\int_{a}^{b}\left[  \alpha(r+2)z^{r+1}+\beta(r+1)z^{r}+\gamma
rz^{r-1}\right]  \rho_{\ast}(z)\,dz\\
(2\alpha+1)\mathbf{E}\left[  Z^{r+1}\right]  +\beta\mathbf{E}\left[
Z^{r}\right]   &  =\alpha(r+2)\mathbf{E}\left[  Z^{r+1}\right]  +\beta
(r+1)\mathbf{E}\left[  Z^{r}\right]  +\gamma r\mathbf{E}\left[  Z^{r-1}%
\right]
\end{align*}
where we assumed that $z^{r+2}\rho_{\ast}(z)\rightarrow0$ at the endpoints $a$
and $b$ of the support. For the case $r=1$, this reduces to $z^{3}\rho_{\ast
}(z)\rightarrow0$ at the endpoints $a$ and $b$, which we are assuming.
Therefore,%
\[
(2\alpha+1)\mathbf{E}\left[  Z^{2}\right]  +\beta\mathbf{E}\left[  Z\right]
=3\alpha\mathbf{E}\left[  Z^{2}\right]  +2\beta\mathbf{E}\left[  Z\right]
+\gamma\mathbf{E}\left[  Z^{0}\right]  \text{.}%
\]
Since $\mathbf{E}\left[  Z\right]  =0$ and $\mathbf{E}\left[  Z^{0}\right]
=1$, this gives $\mathbf{E}\left[  Z^{2}\right]  =\frac{\gamma}{1-\alpha}$.
\end{proof}

\subsection{\bigskip Examples of Pearson distributions\label{PearsonList}}

We present cases of Pearson distributions depending on the degree and number
of zeroes of $g_{\ast}(x)$ as a quadratic polynomial in $(a,b)$. The Pearson
family is closed under affine transformations of the random variable, so we
can limit our focus on the five special cases below. The constant $C$ in each
case represents the normalization constant. See Diaconis and Zabell \cite{DZ}
for a discussion of these cases.

\begin{itemize}
\item CASE 1. If $\deg g_{\ast}(z)=0$, $\rho_{\ast}$ can be (after an affine
transformation) written in the form $\rho_{\ast}(z)=Ce^{-z^{2}/2}$ for
$-\infty<z<\infty$. This is the standard normal density, and $C=1/\sqrt{2\pi}%
$. For this case, $g_{\ast}(z)\equiv1$. Consequently, $Q(z)=z^{2}+1$. If
$z>0$, the inequalities (\ref{tail1a}) and (\ref{tail2})\ of Lemma
\ref{lem:phibound} can be written
\[
\frac{z}{\left(  z^{2}+1\right)  \sqrt{2\pi}}e^{-z^{2}/2}\leq\Phi_{\ast
}(z)\leq\frac{1}{z\sqrt{2\pi}}e^{-z^{2}/2},
\]
a standard inequality involving the tail of the standard normal distribution.

\item CASE 2. If $\deg g_{\ast}(z)=1$, $\rho_{\ast}$ can be written in the
form $\rho_{\ast}(z)=Cz^{r-1}e^{-z/s}$ for $0<z<\infty$, with parameters
$r,s>0$. This is a Gamma density, and $C=1/[s^{r}\Gamma(r)]$. It has mean
$\mu=rs>0$ and variance $rs^{2}.$\ If one wants to make $Z$ centered, the
density takes the form $\rho_{\ast}(z)=C(z+\mu)^{r-1}e^{-(z+\mu)/s}$ for
$-\mu<z<\infty$. For this case, $g_{\ast}(z)=s(z+\mu)_{+}$.

\item CASE 3. If $\deg g_{\ast}(x)=2$ and $g_{\ast}$ has two real roots,
$\rho_{\ast}$ can be written in the form $\rho_{\ast}(x)=Cx^{r-1}(1-x)^{s-1}$
for $0<x<1$, with parameters $r,s>0$. This is a Beta density, and
$C=1/\beta(r,s)$. It has mean $\mu=r/(r+s)>0$ and variance $rs/[(r+s)^{2}%
(r+s+1)]$. Centering the density gives $\rho_{\ast}(x)=C(x+\mu)^{r-1}%
(1-x-\mu)^{s-1}$ for $-\mu<x<1-\mu$. For this case, $g_{\ast}(x)=\frac{1}%
{r+s}(x+\mu)(1-x-\mu)$ when $-\mu<x<1-\mu$ and $0$ elsewhere.

\item CASE 4. If $\deg g_{\ast}(x)=2$ and $g_{\ast}$ has exactly one real
root, $\rho_{\ast}$ can be written in the form $\rho_{\ast}(x)=Cx^{-r}%
e^{-s/x}$ for $0<x<\infty$, with parameters $r>1$ and $s\geq0$. The
normalization constant is $C=\frac{s^{r-1}}{\Gamma(r-1)}$. If $r>2$, it has
mean $\mu=s/(r-2)\geq0$. If $r>3$, it has variance $s^{2}\Gamma(r-3)/\Gamma
(r-1)$. Centering this density yields $\rho_{\ast}(x)=C(x+\mu)^{-r}%
e^{-s/(x+\mu)}$ for $-\mu<x<\infty$, and assume that $r>3$. For this case,
$g_{\ast}(x)=\frac{1}{r-2}(x+\mu)^{2}$ when $-\mu<x$ and $0$ elsewhere.

\item CASE 5. If $\deg g_{\ast}(x)=2$ and $g_{\ast}$ has no real roots,
$\rho_{\ast}(x)=C\left(  1+x^{2}\right)  ^{-r}e^{s(\arctan x)}$ for
$-\infty<x<\infty$, with parameters $r>1/2$ and $-\infty<s<\infty$. The
normalization constant is $C=\frac{\Gamma(r)}{\sqrt{\pi}\Gamma(r-1/2)}%
\left\vert \frac{\Gamma(r-is/2)}{\Gamma(r)}\right\vert ^{2}$. If $r>1$, it has
mean $\mu=s/\left[  2(r-1)\right]  $. If $r>3/2$, it has variance $\left[
4\left(  r-1\right)  ^{2}+s^{2}\right]  /\left[  4(r-1)^{2}(2r-3)\right]  $.
The centered form of the density is $\rho_{\ast}(x)=C\left[  1+\left(
x+\mu\right)  ^{2}\right]  ^{-r}e^{s(\arctan(x+\mu))}$ for $-\infty<x<\infty$,
and assuming that $r>3/2$. For this case, $g_{\ast}(x)=\frac{1}{2(r-1)}\left[
1+\left(  x+\mu\right)  ^{2}\right]  $. Using our original notation,
$\alpha=\frac{1}{2(r-1)}$, $\beta=\frac{\mu}{r-1}$ and $\gamma=\frac{\mu
^{2}+1}{2(r-1)}$.
\end{itemize}

\subsection{Other Lemmas}

\begin{lemma}
\label{lem:phitail}Let $Z$ be a centered Pearson random variable. Then there
exist constants $k_{u}>k_{l}>0$ depending only on $\alpha,\beta,\gamma$ such
that when $z$ is large enough, we have the following inequalities.

\begin{enumerate}
\item If $\alpha=0$ and $\beta>0$,
\[
\frac{k_{l}}{z^{1+\gamma/\beta^{2}}e^{z/\beta}}\leq\Phi_{\ast}\left(
z\right)  \leq\frac{k_{u}}{z^{1+\gamma/\beta^{2}}e^{z/\beta}}\text{.}%
\]

\item If $\alpha>0$, when $z$ is large enough,
\[
\frac{k_{l}}{z^{1+1/\alpha}}\leq\Phi_{\ast}\left(  z\right)  \leq\frac{k_{u}%
}{z^{1+1/\alpha}}\text{.}%
\]

\item[3.] Assuming $Z$'s support extends to $-\infty$, if $\alpha>0$, when
$z<0$ and $\left\vert z\right\vert $ is large enough,
\[
\frac{k_{l}}{\left\vert z\right\vert ^{1+1/\alpha}}\leq1-\Phi_{\ast}\left(
z\right)  \leq\frac{k_{u}}{\left\vert z\right\vert ^{1+1/\alpha}}\text{.}%
\]

\end{enumerate}
\end{lemma}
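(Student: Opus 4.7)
The plan is to match each point of the lemma with the relevant centered Pearson subfamilies from Appendix~\ref{PearsonList}, and then to apply Lemma~\ref{lem:phibound} together with the explicit forms of $\rho_{\ast}$ and $g_{\ast}$. Point~1 ($\alpha=0$, $\beta>0$) is the centered Gamma (Case~2), with $s=\beta$, $\mu=\gamma/\beta$, and shape $r=\gamma/\beta^{2}$. Point~2 ($\alpha>0$) covers the centered inverse-Gamma (Case~4, $r=2+1/\alpha$) and the Pearson-IV family (Case~5, $r=1+1/(2\alpha)$). Point~3 requires the left endpoint to be $-\infty$, which singles out Case~5.

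The uniform step is to observe that since $g_{\ast}^{\prime}(z)=2\alpha z+\beta$ and $Q(z)=(1-\alpha)z^{2}+\gamma$, inequalities (\ref{tail1a}) and (\ref{tail2}) of Lemma~\ref{lem:phibound} become
\[
\frac{(1-2\alpha)z-\beta}{(1-\alpha)z^{2}+\gamma}\,g_{\ast}(z)\rho_{\ast}(z)\;\le\;\Phi_{\ast}(z)\;\le\;\frac{g_{\ast}(z)\rho_{\ast}(z)}{z}
\]
for all sufficiently large $z>0$. The ratio of the two prefactors tends to the positive constant $(1-\alpha)/(1-2\alpha)$ whenever $\alpha<1/2$ (which includes $\alpha=0$), so $\Phi_{\ast}(z)\asymp g_{\ast}(z)\rho_{\ast}(z)/z$ as $z\to+\infty$, and the whole proof reduces to extracting the asymptotics of $g_{\ast}(z)\rho_{\ast}(z)$. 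For Case~2 this is $Cs(z+\mu)^{r}e^{-(z+\mu)/s}$ (one integration by parts, or equivalently the Stein identity $[g_{\ast}\rho_{\ast}]^{\prime}=-x\rho_{\ast}$ from Lemma~\ref{lemgstar}), yielding the Gamma-type rate of Point~1. For Case~4, $g_{\ast}(z)\rho_{\ast}(z)=\frac{C}{r-2}(z+\mu)^{2-r}e^{-s/(z+\mu)}$, whose leading order is $z^{2-r}$ times a positive constant; using $r=2+1/\alpha$ and dividing by $z$ yields the $z^{-1-1/\alpha}$ rate of Point~2. Case~5 is analogous, with $g_{\ast}(z)\rho_{\ast}(z)$ proportional to $[1+(z+\mu)^{2}]^{1-r}e^{s\arctan(z+\mu)}$, which behaves like $z^{2-2r}$ times the positive constant $e^{s\pi/2}$ as $z\to+\infty$; since $1-2r=-1-1/\alpha$, this again matches Point~2. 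Point~3 is handled symmetrically via inequalities (\ref{tail1b}) and (\ref{tail3}): as $x\to-\infty$, $g_{\ast}^{\prime}(x)-x=(2\alpha-1)x+\beta\to+\infty$ whenever $\alpha<1/2$, so both bounds give the expected $|x|^{-1-1/\alpha}$ rate after noting that $e^{s\arctan(x+\mu)}\to e^{-s\pi/2}>0$.

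The argument is therefore largely routine bookkeeping. The only mild delicacy is in Case~5, where one must verify that the factor $e^{s\arctan(\cdot)}$ is bounded above and below by positive constants in neighbourhoods of $\pm\infty$, so that it can be absorbed into $k_{u}$ and $k_{l}$ without disturbing the polynomial rate. I would also quickly check that the lower-bound coefficient $(1-2\alpha)z-\beta$ (resp.\ $(2\alpha-1)x+\beta$) is strictly positive for all sufficiently large $|z|$, which is immediate from $\alpha<1/2$, so that the max in (\ref{tail1a}) and (\ref{tail1b}) does not collapse to zero.
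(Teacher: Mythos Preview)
Your approach is essentially identical to the paper's: apply Lemma~\ref{lem:phibound} in its Pearson form
\[
\frac{(1-2\alpha)z-\beta}{(1-\alpha)z^{2}+\gamma}\,g_{\ast}(z)\rho_{\ast}(z)\;\le\;\Phi_{\ast}(z)\;\le\;\frac{g_{\ast}(z)\rho_{\ast}(z)}{z},
\]
then read off the asymptotics of $g_{\ast}(z)\rho_{\ast}(z)$ from the explicit centered densities of Cases~2, 4, and 5 (and the left-tail analogue via (\ref{tail1b})--(\ref{tail3}) for Case~5). The paper organizes the casework the same way and computes the same limits; your explicit remark that the lower bound requires $\alpha<1/2$ so that $(1-2\alpha)z-\beta>0$ eventually is a point the paper leaves implicit.
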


\begin{proof}
For the proof of this lemma, which is presumably well-known, but is included
for completeness, we will use $C$ for the normalization constant of each
density to be considered.

In Point 1, let $\mu=\gamma/\beta>0$. Then $Z$ has support $\left(
-\mu,\infty\right)  $; see Case 2 in Appendix \ref{PearsonList}. In its
support, $Z$ has $g_{\ast}\left(  z\right)  =\beta z+\gamma=\beta\left(
z+\mu\right)  $ and density
\[
\rho_{\ast}\left(  z\right)  =C\left(  z+\mu\right)  ^{-\mu/\beta-1}%
\exp\left(  -\frac{z+\mu}{\beta}\right)  \text{.}%
\]
Note that
\[
\lim_{z\rightarrow\infty}z^{\mu/\beta}e^{z/\beta}g_{\ast}\left(  z\right)
\rho_{\ast}\left(  z\right)  =C\beta\lim_{z\rightarrow\infty}\frac
{z^{\mu/\beta}}{\left(  z+\mu\right)  ^{\mu/\beta}}\exp\left(  \frac{z}{\beta
}-\frac{z+\mu}{\beta}\right)  =C\beta e^{-\mu/\beta}.
\]
From Lemma \ref{lem:phibound},
\[
\frac{z-\beta}{z^{2}+\gamma}g_{\ast}\left(  z\right)  \rho_{\ast}\left(
z\right)  \leq\Phi_{\ast}\left(  z\right)  \leq\frac{1}{z}g_{\ast}\left(
z\right)  \rho_{\ast}\left(  z\right)
\]
so
\[
C\beta e^{-\mu/\beta}\leq\liminf_{z\rightarrow\infty}z^{1+\mu/\beta}%
e^{z/\beta}\Phi_{\ast}\left(  z\right)  \leq\limsup_{z\rightarrow\infty
}z^{1+\mu/\beta}e^{z/\beta}\Phi_{\ast}\left(  z\right)  \leq C\beta
e^{-\mu/\beta}\text{.}%
\]
Therefore, we can choose some constants $k_{u}\left(  \beta,\gamma\right)
>c_{l}\left(  \beta,\gamma\right)  >0$ such that when $z$ is large enough,
\[
\frac{k_{l}}{z^{1+\mu/\beta}e^{z/\beta}}\leq\Phi_{\ast}\left(  z\right)
\leq\frac{k_{u}}{z^{1+\mu/\beta}e^{z/\beta}}\text{.}%
\]

To prove Point 2, we first show that $\lim_{z\rightarrow\infty}z^{1/\alpha
}g_{\ast}\left(  z\right)  \rho_{\ast}\left(  z\right)  $ is a finite number
$K.$ We consider the cases $4\alpha\gamma-\beta^{2}=0$ and $4\alpha
\gamma-\beta^{2}>0$ separately. We need not consider $4\alpha\gamma-\beta
^{2}<0$ since it corresponds to Case 3 in Appendix \ref{PearsonList} for which
the right endpoint of the support of $Z$ is $b<\infty$ and so necessarily
$\alpha<0$.

Supose that $4\alpha\gamma-\beta^{2}=0$ and let $\mu=\frac{\beta}{2\alpha}>0$.
Then $\alpha z^{2}+\beta z+\gamma=\alpha\left(  z+\mu\right)  ^{2}$ has one
real root and the support of $Z$ is $\left(  -\mu,\infty\right)  $; see Case 4
in Appendix \ref{PearsonList}. In its support, $Z$ has $g_{\ast}\left(
z\right)  =\alpha\left(  z+\mu\right)  ^{2}$ and density
\[
\rho_{\ast}\left(  z\right)  =C\left(  z+\mu\right)  ^{-2-1/\alpha}\exp\left(
-\frac{s}{z+\mu}\right)
\]
where $s=\mu/\alpha=\beta/\left(  2\alpha^{2}\right)  $. Therefore,
\[
\lim_{z\rightarrow\infty}z^{1/\alpha}g_{\ast}\left(  z\right)  \rho_{\ast
}\left(  z\right)  =C\alpha\lim_{z\rightarrow\infty}\frac{z^{1/\alpha}%
}{\left(  z+\mu\right)  ^{1/\alpha}}\exp\left(  -\frac{s}{z+\mu}\right)
=C\alpha\text{.}%
\]
Now suppose that $\delta^{2}:=\left(  4\alpha\gamma-\beta^{2}\right)  /\left(
4\alpha^{2}\right)  >0$ so $\alpha z^{2}+\beta z+\gamma$ has two imaginary
roots and the support of $Z$ is $\left(  -\infty,\infty\right)  $. Letting
$\mu=\beta/\left(  2\alpha\right)  $ allows us to write $g_{\ast}\left(
z\right)  =\alpha\left(  z+\mu\right)  ^{2}+\alpha\delta^{2}$ and the density
of $Z$ as
\[
\rho_{\ast}\left(  z\right)  =C\left[  \left(  z+\mu\right)  ^{2}+\delta
^{2}\right]  ^{-1-\frac{1}{2\alpha}}\exp\left[  \frac{\mu}{\alpha\delta
}\arctan\left(  \frac{z+\mu}{\delta}\right)  \right]  \text{,}%
\]
a slight variation of the density in Case 5 in Appendix \ref{PearsonList}.
Note than in our present case,
\[
\lim_{z\rightarrow\infty}z^{1/\alpha}g_{\ast}\left(  z\right)  \rho_{\ast
}\left(  z\right)  =C\alpha\lim_{z\rightarrow\infty}\frac{z^{1/\alpha}%
}{\left[  \left(  z+\mu\right)  ^{2}+\delta^{2}\right]  ^{\frac{1}{2\alpha}}%
}\exp\left[  \frac{\mu}{\alpha\delta}\arctan\left(  \frac{z+\mu}{\delta
}\right)  \right]  =C\alpha\exp\left[  \frac{\mu\pi}{2\alpha\delta}\right]
\text{.}%
\]
From Lemma \ref{lem:phibound},
\[
\frac{\left(  1-2\alpha\right)  z-\beta}{\left(  1-\alpha\right)  z^{2}%
+\gamma}g_{\ast}\left(  z\right)  \rho_{\ast}\left(  z\right)  \leq\Phi_{\ast
}\left(  z\right)  \leq\frac{1}{z}g_{\ast}\left(  z\right)  \rho_{\ast}\left(
z\right)  \text{.}%
\]
From these bounds we conclude
\[
K\frac{1-2\alpha}{1-\alpha}\leq\liminf_{z\rightarrow\infty}z^{1+1/\alpha}%
\Phi_{\ast}\left(  z\right)  \leq\limsup_{z\rightarrow\infty}z^{1+1/\alpha
}\Phi_{\ast}\left(  z\right)  \leq K\text{.}%
\]
Therefore, when $z$ is large enough, for some constants $k_{u}\left(
\alpha,\beta,\gamma\right)  >k_{l}\left(  \alpha,\beta,\gamma\right)  >0$,
\[
\frac{k_{l}}{z^{1+1/\alpha}}\leq\Phi_{\ast}\left(  z\right)  \leq\frac{k_{u}%
}{z^{1+1/\alpha}}\text{.}%
\]

To prove Point 3, we consider Case 5 again.
\[
\lim_{z\rightarrow-\infty}\left\vert z\right\vert ^{1/\alpha}g_{\ast}\left(
z\right)  \rho_{\ast}\left(  z\right)  =C\alpha\lim_{y\rightarrow\infty}%
\frac{y^{1/\alpha}}{\left[  \left(  -y+\mu\right)  ^{2}+\delta^{2}\right]
^{\frac{1}{2\alpha}}}\exp\left[  \frac{\mu}{\alpha\delta}\arctan\left(
\frac{-y+\mu}{\delta}\right)  \right]  =C\alpha\exp\left[  -\frac{\mu\pi
}{2\alpha\delta}\right]  \text{.}%
\]
The conclusion follows similarly after using Lemma \ref{lem:phibound} when
$z<0$:
\[
\frac{\left(  1-2\alpha\right)  \left\vert z\right\vert -\beta}{\left(
1-\alpha\right)  \left\vert z\right\vert ^{2}+\gamma}g_{\ast}\left(  z\right)
\rho_{\ast}\left(  z\right)  \leq1-\Phi_{\ast}\left(  z\right)  \leq\frac
{1}{\left\vert z\right\vert }g_{\ast}\left(  z\right)  \rho_{\ast}\left(
z\right)  \text{.}%
\]

\end{proof}

\begin{lemma}
\label{lem:PearsonMoment}Let $Z$ be a centered Pearson random variable. If
$\alpha\leq0$, all moments of positive order exist. If $\alpha>0$, the moment
of order $m$ exists if and only if $m<1+1/\alpha$.
\end{lemma}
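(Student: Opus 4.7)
The plan is to combine the explicit density classification given in Appendix \ref{PearsonList} with the tail estimates already established in Lemma \ref{lem:phitail}. For $\alpha\leq 0$, I would simply inspect the list of Pearson cases: the situation $\alpha<0$ occurs only in Case 3 (Beta), whose support is a bounded interval, so every positive moment exists trivially; the situation $\alpha=0$ occurs in Case 1 (normal) or Case 2 (gamma), both of which have super-polynomial (Gaussian or exponential) tail decay, so again all positive moments exist.

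For the main case $\alpha>0$, the classification forces $Z$ to belong to Case 4 (support $(-\mu,\infty)$) or Case 5 (support $\mathbf{R}$). In both, Lemma \ref{lem:phitail} supplies matching two-sided polynomial tail bounds: there are constants $k_u>k_l>0$ with $k_l z^{-1-1/\alpha}\leq\Phi_{\ast}(z)\leq k_u z^{-1-1/\alpha}$ for large $z>0$, and (in Case 5, using Point 3 of that lemma) the same order of decay for $1-\Phi_{\ast}(z)$ on the negative tail. I would then apply the standard tail-integration identity
\[
\mathbf{E}[|Z|^m]=m\int_0^\infty z^{m-1}\,\mathbf{P}[|Z|>z]\,dz,
\]
so that the integrability at infinity reduces to that of $\int^\infty z^{m-2-1/\alpha}\,dz$, which is finite if and only if $m<1+1/\alpha$. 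Finiteness of the integral on any bounded subset of the support is automatic since $\rho_\ast$ is continuous and $|z|^m$ is bounded on bounded sets.

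I do not foresee a hard step; the argument is essentially bookkeeping. The one place that needs mild care is to ensure that in Case 5 both the left and right tails are controlled by $|z|^{-1-1/\alpha}$ (handled by Points 2 and 3 of Lemma \ref{lem:phitail}), and in Case 4 that no issue arises near the left endpoint $-\mu$ (automatic from continuity of $\rho_\ast$). As an alternative/sanity check, one can also note that the moment recursion derived in the proof on page \pageref{bulletproof}, namely $(1-\alpha r)\mathbf{E}[Z^{r+1}]=\beta r\mathbf{E}[Z^r]+\gamma r\mathbf{E}[Z^{r-1}]$, exhibits a coefficient that vanishes exactly at $r=1/\alpha$ when $\alpha>0$, which is the sharp threshold predicted by the tail analysis and provides independent evidence for the stated critical exponent $m=1+1/\alpha$.
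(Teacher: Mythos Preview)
Your proposal is correct and follows essentially the same approach as the paper: both arguments dispose of the $\alpha\leq 0$ cases by inspection of the Pearson classification in Appendix \ref{PearsonList}, and for $\alpha>0$ both invoke the two-sided polynomial tail bounds of Lemma \ref{lem:phitail} (Points 2 and 3) and then convert tail control into a moment criterion. The only cosmetic difference is that the paper uses the series criterion $\mathbf{E}[|Z|^m]<\infty \Leftrightarrow \sum_{n\geq 1} n^{m-1}\mathbf{P}[|Z|\geq n]<\infty$ where you use the equivalent integral identity $\mathbf{E}[|Z|^m]=m\int_0^\infty z^{m-1}\mathbf{P}[|Z|>z]\,dz$; this changes nothing of substance.
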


\begin{proof}
The random variables in Case 1 ($\alpha=\beta=0$) of Appendix
\ref{PearsonList} are Normal, while those in Case 3 ($\alpha<0$) have finite
intervals for support.\ It suffices to consider the cases where $\alpha=0$ and
$\beta>0$, and where $\alpha>0$. Let $m>0$. We will use the fact that
$\mathbf{E}\left[  \left|  Z\right|  ^{m}\right]  <\infty$ if and only if
$\sum_{n=1}^{\infty}n^{m-1}\mathbf{P}\left[  \left|  Z\right|  \geq n\right]
<\infty$.

If $\alpha=0$ and $\beta>0$, and $Z$ is supported over $\left(  a,\infty
\right)  $, then by Lemma \ref{lem:phitail}, $\mathbf{E}\left[  \left|
Z\right|  ^{m}\right]  <\infty$ if and only if
\[
\sum_{n=1}^{\infty}\frac{n^{m-1}}{n^{1+\gamma/\beta^{2}}e^{n/\beta}}%
<\infty\text{,}%
\]
which is always the case.

Now suppose $\alpha>0$. Since $\mathbf{P}\left[  \left\vert Z\right\vert \geq
n\right]  =\Phi_{\ast}\left(  n\right)  +1-\Phi_{\ast}\left(  -n\right)  $,
the$\circ$n by Lemma \ref{lem:phitail} again, $\mathbf{E}\left[  \left\vert
Z\right\vert ^{m}\right]  <\infty$ if and only if
\[
\sum_{n=1}^{\infty}\frac{n^{m-1}}{n^{1+1/\alpha}}=\sum_{n=1}^{\infty}\frac
{1}{n^{2+1/\alpha-m}}\mathbf{<\infty}\text{.}%
\]
This is the case if and only if $2+1/\alpha-m>1$, i.e. $m<1+1/\alpha$.
\end{proof}

\end{document}